\newtheorem{theorem}{Theorem}[section]
\newtheorem{lemma}[theorem]{Lemma}
\newtheorem{proposition}[theorem]{Proposition}
\newtheorem{definition}[theorem]{Definition}
\newtheorem{remark}{Remark}
\newcommand{\ds}{\displaystyle}
\newcommand{\NN}{\mathbb N}
\newcommand{\CC}{\mathbb C}
\newcommand{\RR}{\mathbb R}
\newcommand{\ZZ}{\mathbb Z}
\newcommand{\EE}{\mathcal E}
\newcommand{\SSS}{\mathcal S}
\newcommand{\beq}{\begin{eqnarray}}
\newcommand{\eeq}{\end{eqnarray}}
\newcommand{\beqs}{\begin{eqnarray*}}
\newcommand{\eeqs}{\end{eqnarray*}}
\begin{document}
\catcode`\@=11

 % definition of section and equation numbering system

  \renewcommand{\theequation}{\thesection.\arabic{equation}}
  \renewcommand{\section}%
  {\setcounter{equation}{0}\@startsection {section}{1}{\z@}{-3.5ex plus -1ex
   minus -.2ex}{2.3ex plus .2ex}{\Large\bf}}
\title{\bf Parametrices and hypoellipticity for pseudodifferential operators on spaces of tempered ultraditributions }
\author{Marco Cappiello $^{\textrm{a}}$, Stevan Pilipovi\'c $^{\textrm{b}}$ and Bojan Prangoski $^{\textrm{c}}$}
\date{}
\maketitle
\noindent
\def\thefootnote{}
\footnote{ 2010 \textit{Mathematical Subject Classification: 47G30, 46F05, 35A17 } \\
\textit{keywords: Tempered ultradistributions, pseudodifferential operators, parametrices, hypoellipticity} }
\begin{abstract}
We construct parametrices for a class of pseudodifferential operators of infinite order acting on spaces of tempered ultradistributions
of Beurling and Roumieu type. As a consequence we obtain a result of hypoellipticity in these spaces.
\end{abstract}

\section{Introduction}
The main concern in this paper is the study of hypoellipticity for pseudodifferential operators in the setting of tempered ultradistributions
of Beurling and Roumieu type on $\RR^d$. These distributions represent the global counterpart of the ultradistributions studied by Komatsu,
see \cite{Komatsu1, Komatsu2, Komatsu3}. We recall that the space of test functions for the ultradistributions of \cite{Komatsu1, Komatsu2, Komatsu3}
is a natural generalisation of the Gevrey classes. In the same way tempered ultradistributions act on a space which generalises the spaces of type
$\mathcal{S}$ introduced by Gelfand and Shilov in \cite{GS}.
\par
Before presenting our results let us recall some previous results on hypoellipticity in the spaces mentioned above. Hypoellipticity in Gevrey classes has been studied by several authors, see \cite{HMM, LR, Rodino, Z} and the references therein. Indeed the functional setting allows to consider very general symbols $a(x,\xi)$ admitting exponential growth at infinity with respect to the covariable $\xi$. This was first noticed in \cite{Z} and generalised in \cite{CM, CZ} with applications to hyperbolic equations in Gevrey classes. In \cite{Z} the hypoellipticity has been obtained
by means of the construction of a parametrix. More recently, the results of \cite{Z} have been extended by Fern\'andez et al. \cite{FGJ} to the space of ultradistributions of Beurling type and by the first author to the global frame of the Gelfand-Shilov spaces of type $\mathcal{S}$, see \cite{C1,C2,C3}, allowing exponential growth for the symbols also with respect to the variable $x$.
\par
It is then natural to study the same problem for pseudodifferential operators acting on tempered ultradistributions. In a recent paper \cite{BojanS}, the third author constructed a global calculus for pseudodifferential operators of infinite order of Shubin type in this setting. Here we want to
apply this tool to construct parametrices for the class of \cite{BojanS} and to prove a hypoellipticity result.
\par
Let us first fix some notation and introduce the functional setting where our results are obtained. In the sequel, the sets of integer, non-negative integer, positive integer, real and complex numbers are denoted by $\ZZ$, $\NN$, $\ZZ_+$, $\RR$, $\CC$. We denote $\langle x\rangle =(1+|x|^2)^{1/2} $ for $x\in \RR^d$, $D^{\alpha}= D_1^{\alpha_1}\ldots D_d^{\alpha_d},\quad D_j^
{\alpha_j}={i^{-1}}\partial^{\alpha_j}/{\partial x}^{\alpha_j}$, $\alpha=(\alpha_1,\alpha_2,\ldots,\alpha_d)\in\NN^d$. Finally, fixed $B>0$ we shall denote by $Q_B^c$ the set of all $(x,\xi)\in \RR^{2d}$ for which we have $\langle x \rangle \geq B$ or $\langle \xi \rangle \geq B.$\\
\indent Following \cite{Komatsu1}, in the sequel we shall consider sequences $M_{p}$ of positive numbers such that $M_0=M_1=1$ and satisfying
all or some of the following conditions:\\
\indent $(M.1)$ $M_{p}^{2} \leq M_{p-1} M_{p+1}, \; \; p \in\ZZ_+$;\\
\indent $(M.2)$ $\ds M_{p} \leq c_0H^{p} \min_{0\leq q\leq p} \{M_{p-q} M_{q}\}$, $p,q\in \NN$, for some $c_0,H\geq1$;\\
\indent $(M.3)$  $\ds\sum^{\infty}_{p=q+1}   \frac{M_{p-1}}{M_{p}}\leq c_0q \frac{M_{q}}{M_{q+1}}$, $q\in \ZZ_+$,\\
\indent $(M.4)$ $\ds \left(\frac{M_p}{p!}\right)^2\leq \frac{M_{p-1}}{(p-1)!}\cdot \frac{M_{p+1}}{(p+1)!}$, for all $p\in\ZZ_+$,\\
In some assertions in the sequel we could replace $(M.3)$ by the weaker assumption \\
\indent $(M.3)'$ $\ds \sum_{p=1}^{\infty}\frac{M_{p-1}}{M_p}<\infty,$ \\
cf. \cite{Komatsu1}. It is important to note that  $(M.4)$ implies $(M.1)$.\\
Note that the Gevrey sequence $M_p=p!^s$, $s>1$, satisfies all of these conditions.\\
\indent For a multi-index $\alpha\in\NN^d$, $M_{\alpha}$ will mean $M_{|\alpha|}$, $|\alpha|=\alpha_1+...+\alpha_d$. Recall that the associated function for the sequence $M_{p}$ is defined by
\beqs
M(\rho)=\sup_{p\in\NN}\log_+   \frac{\rho^{p}}{M_{p}} , \; \; \rho > 0.
\eeqs
The function $M(\rho)$ is non-negative, continuous, monotonically increasing, it vanishes for sufficiently small $\rho>0$ and increases more rapidly than $\ln \rho^p$ when $\rho$ tends to infinity, for any $p\in\NN$ (see \cite{Komatsu1}). \par
For $m >0$ and a sequence $M_p$ satisfying the conditions $(M.1)-(M.3)$, we shall denote by $\mathcal{S}_{\infty}^{M_p,m}(\RR^d)$ the Banach space of all functions $\varphi \in \mathcal{C}^{\infty}(\RR^d)$ such that
\begin{equation} \label{norm}
\|\varphi\|_m:=\sup_{\alpha\in \NN^d}\sup_{x \in \RR^d}\frac{m^{|\alpha|}| D^{\alpha}\varphi(x)| e^{M(m|x|)}}{M_{\alpha }}<\infty,
\end{equation}
endowed with the norm in \eqref{norm} and we denote $\ds\SSS^{(M_p)}(\RR^d)=\lim_{\substack{\longleftarrow\\ m\rightarrow\infty}} \SSS^{M_p,m}_{\infty}(\RR^d)$ and $\ds\SSS^{\{M_p\}}(\RR^d)=\lim_{\substack{\longrightarrow\\ m\rightarrow 0}} \SSS^{M_p,m}_{\infty}(\RR^d)$. In the sequel we shall consider simultaneously the two latter spaces by using the common notation $\mathcal{S}^{\ast}(\RR^d)$.
For each space we will consider a suitable symbol class. Definitions and statements will be formulated first for the $(M_p)$ case and then for the $\{M_p\}$ case, using the notation $\ast$. We shall denote by $\mathcal{S}^{\ast \prime}(\RR^d)$ the strong dual space of $\mathcal{S}^{\ast}(\RR^d)$. We refer to \cite{PilipovicK, PilipovicU, PilipovicT} for the properties of $\mathcal{S}^{\ast}(\RR^d)$ and $\mathcal{S}^{\ast \prime}(\RR^d)$. Here we just recall that the Fourier transformation is an automorphism on $\mathcal{S}^{\ast}(\RR^d)$ and on $\mathcal{S}^{\ast \prime}(\RR^d)$ and that for $M_p=p!^s,\, s>1$, we have $M(\rho) \sim \rho^{1/s}$. In this case $\mathcal{S}^{\ast}(\RR^d)$ coincides respectively with the Gelfand-Shilov spaces $\Sigma_s(\RR^d)$ (resp. $\mathcal{S}_s(\RR^d)$) of all functions  $\varphi \in \mathcal{C}^{\infty}(\RR^d)$ such that
$$\sup_{\alpha, \beta \in \NN^d} h^{-|\alpha|-|\beta|}(\alpha!\beta!)^{-s} \sup_{x \in \RR^d}|x^\beta \partial^\alpha \varphi(x)|<\infty$$
for every $h >0$ (resp. for some $h >0$), cf. \cite{GS, PilipovicU}.
\par
Following \cite{BojanS} we now introduce the class of pseudodifferential operators to which our results apply.
Let $M_p, A_p$ be two sequences of positive numbers. We assume that $M_p$ satisfies  $(M.1), (M.2)$ and $(M.3)$ and that $A_p$  satisfies
 $A_0=A_1=1$, $(M.1), (M.2), (M.3)'$ and $(M.4).$ Moreover we suppose that $A_p \subset M_p$ i.e. there exist $c_0 >0, L>0$ such that $A_p \leq c_0 L^p M_p$ for all $p \in \NN.$ Let $\rho_0=\inf\{\rho\in\RR_+|\, A_p\subset M_p^{\rho}\}$. Obviously $0<\rho_0\leq 1$. Let $\rho\in\RR_+$ be arbitrary but fixed such that $\rho_0\leq \rho\leq 1$ if the infimum can be reached, or otherwise $\rho_0<\rho\leq 1$. For any fixed $h>0,m >0$ we denote by $\Gamma_{A_p, \rho}^{M_p, \infty}(\RR^{2d};h,m)$ the space of all functions $a(x,\xi) \in \mathcal{C}^{\infty}(\RR^{2d})$ such that
\begin{equation} \label{defsymb} \sup_{\alpha, \beta \in \ZZ_+^d} \sup_{(x,\xi) \in \RR^{2d}}\frac{|D_{\xi}^\alpha D_x^\beta a (x,\xi)| \langle (x,\xi)\rangle^{\rho|\alpha+\beta|} e^{-(M(m|x|)+M(m|\xi|))}}{h^{|\alpha+\beta|}A_{\alpha}A_{\beta}}<\infty,
\end{equation}
where $M(\cdot)$ is the associated function for the sequence $M_p$. Then we define
\beqs
\Gamma_{A_p, \rho}^{(M_p), \infty}(\RR^{2d})&=&\lim_{\stackrel{\longrightarrow}{m \rightarrow \infty}}\lim_{\stackrel{\longleftarrow}{h \rightarrow 0}}\Gamma_{A_p, \rho}^{M_p,\infty}(\RR^{2d};h,m);\\
\Gamma_{A_p, \rho}^{\{M_p\}, \infty}(\RR^{2d})&=&\lim_{\stackrel{\longrightarrow}{h \rightarrow \infty}}\lim_{\stackrel{\longleftarrow}{m \rightarrow 0}}\Gamma_{A_p, \rho}^{M_p,\infty}(\RR^{2d};h,m).
\eeqs
\begin{remark}
We notice that in the case $M_p=p!^{s}, s>1,$ we can replace $M(m|x|)+M(m|\xi|)$ by $M(m(|x|+|\xi|))$ in \eqref{defsymb}. In particular, in the case of non-quasi-analytic Gelfand-Shilov spaces, we can include symbols of the form $e^{\pm \langle (x,\xi) \rangle^{1/s}}$ in our class, cf. \cite{PT}. \end{remark}
We  associate to any symbol $a \in \Gamma^{\ast, \infty}_{A_p, \rho}(\RR^{2d})$ a pseudodifferential operator $a(x,D)$ defined, as it is usual, by
\begin{equation}\label{pseudodif}
a(x,D)f(x)= (2\pi)^{-d} \int_{\RR^d} e^{i\langle x,\xi \rangle} a(x,\xi) \hat{f}(\xi) d\xi, \qquad f \in \mathcal{S}^{\ast}(\RR^d),
\end{equation}
where $\hat{f}$ denotes the Fourier transform of $f$. In \cite{BojanS} it was proved that operators of the form \eqref{pseudodif} act continuously on $\mathcal{S}^{\ast}(\RR^d)$ and on $\mathcal{S}^{\ast \prime}(\RR^d)$. Moreover, a symbolic calculus for $\Gamma^{\ast, \infty}_{A_p, \rho}(\RR^{2d})$ (denoted there by $\Gamma^{\ast, \infty}_{A_p, A_p, \rho}(\RR^{2d})$) has been constructed. As a consequence it was proved that the class of pseudodifferential operators with symbols in $\Gamma^{\ast, \infty}_{A_p, \rho}(\RR^{2d})$ is closed with respect to composition and adjoints. Here we introduce a notion of hypoellipticity for this class.

\begin{definition}
Let $a\in\Gamma^{*,\infty}_{A_p,\rho}\left(\RR^{2d}\right)$. We say that $a$ is $\Gamma^{*,\infty}_{A_p,\rho}$-hypoelliptic if
\begin{itemize}
\item[$i$)] there exists $B>0$ such that there exist $c,m>0$ (resp. for every $m>0$ there exists $c>0$) such that
\beq\label{dd1}
|a(x,\xi)|\geq c e^{-M(m|x|)-M(m|\xi|)},\quad (x,\xi)\in Q^c_B
\eeq
\item[$ii$)] there exists $B>0$ such that for every $h>0$ there exists $C>0$ (resp. there exist $h,C>0$) such that
\beq\label{dd2}
\left|D^{\alpha}_{\xi}D^{\beta}_x a(x,\xi)\right|\leq C\frac{h^{|\alpha|+|\beta|}|a(x,\xi)|A_{\alpha}A_{\beta}}{\langle(x,\xi)\rangle^{\rho(|\alpha|+|\beta|)}},\,\, \alpha,\beta\in\NN^d,\, (x,\xi)\in Q^c_B.
\eeq
\end{itemize}
\end{definition}

The main result of the paper is the following

\begin{theorem}\label{hypothm}
Let $a\in\Gamma^{*,\infty}_{A_p,\rho}(\RR^{2d})$ be $\Gamma^{*,\infty}_{A_p,\rho}$-hypoelliptic and let $v\in\SSS^*(\RR^d)$. Then every solution $u\in\SSS^{* \prime}(\RR^d)$ to the equation $a(x,D)u=v$ belongs to $\SSS^*(\RR^d)$.
\end{theorem}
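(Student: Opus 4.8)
The plan is to prove the theorem by constructing a left parametrix for $a(x,D)$ inside the ultradifferentiable calculus of \cite{BojanS}, and then to read off the regularity of $u$ directly from the parametrix identity. Throughout, the sequence $M_p$ governs the admissible growth at infinity through the associated function $M(\cdot)$, while $A_p$ controls the ultradifferentiability of the symbols; the non-quasi-analyticity furnished by $(M.3)'$ for $A_p$ guarantees the existence of the ultradifferentiable cut-off functions used below.

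First I would invert the symbol off a compact set. Fix $B>0$ as in $i)$ and $ii)$ and pick an ultradifferentiable cut-off $\chi$, equal to $1$ on $Q^c_{2B}$ and vanishing on $\{\langle x\rangle\le B,\ \langle\xi\rangle\le B\}$, so that $\supp\chi\subset Q^c_B$. Setting $b_0=\chi/a$, estimate $i)$ gives $|b_0|\le C\,e^{M(m|x|)+M(m|\xi|)}$, while the Leibniz and Fa\`a di Bruno formulae together with $ii)$ and property $(M.2)$ of $A_p$ show that the derivatives of $b_0$ satisfy the estimates of the symbol class with the exponential weight replaced by its reciprocal; call this reciprocal-weight class $\widetilde\Gamma$, to which the calculus of \cite{BojanS} applies as well. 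Next I would correct $b_0$ by a formal Neumann series: the composition formula yields
\[ b\#a\sim\sum_{\gamma}\frac{1}{\gamma!}\,\partial_\xi^\gamma b\,D_x^\gamma a, \]
and, seeking $b\sim\sum_{j\ge0}b_j$ with $b\#a\equiv1$ modulo a regularizing symbol, one solves recursively from $b_0$ by
\[ b_j=-\,b_0\!\!\sum_{\substack{k+|\gamma|=j\\ 0\le k<j}}\frac{1}{\gamma!}\,\partial_\xi^\gamma b_k\,D_x^\gamma a,\qquad j\ge1. \]
Each differentiation in $\xi$, combined with the order-reducing exponent $\rho$, makes $b_j$ decay by an extra factor $\langle(x,\xi)\rangle^{-\rho j}$, while the $A_p$-type bounds are preserved thanks to $(M.2)$.

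The delicate point, and the step I expect to be the main obstacle, is the realization of the formal series $\sum_j b_j$ as a genuine symbol in $\widetilde\Gamma$. In the ultradifferentiable category naive Borel summation is unavailable, so I would use the summation of asymptotic expansions built into the calculus of \cite{BojanS}: insert cut-offs $\psi_j$ supported in $\{\langle(x,\xi)\rangle\ge R_j\}$ with $R_j\uparrow\infty$ chosen rapidly enough, and set $b=\sum_j\psi_j b_j$. Convergence of this series in the topology of $\widetilde\Gamma$, and the negligibility of the remainders $b-\sum_{j<N}\psi_jb_j$, rest on the simultaneous use of $(M.1)$ and $(M.2)$ for both $M_p$ and $A_p$, the inclusion $A_p\subset M_p^{\rho}$, and the precise balance encoded in the admissible range of $\rho$. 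Controlling the constants uniformly in $j$ while matching the two exponential weights $e^{\pm(M(m|x|)+M(m|\xi|))}$ is exactly where the full strength of the hypotheses is needed.

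The construction then yields the parametrix identity $b(x,D)\,a(x,D)=I+R$, where $R=r(x,D)$ is $\ast$-regularizing, i.e.\ maps $\SSS^{\ast\prime}(\RR^d)$ continuously into $\SSS^{\ast}(\RR^d)$. Given a solution $u\in\SSS^{\ast\prime}(\RR^d)$ of $a(x,D)u=v$ with $v\in\SSS^{\ast}(\RR^d)$, applying $b(x,D)$ and using the identity gives
\[ u=b(x,D)\,a(x,D)\,u-Ru=b(x,D)v-Ru. \]
Since $b(x,D)$ maps $\SSS^{\ast}$ into $\SSS^{\ast}$ by the continuity results of \cite{BojanS}, and $Ru\in\SSS^{\ast}$ because $R$ is regularizing, both terms on the right lie in $\SSS^{\ast}(\RR^d)$; hence $u\in\SSS^{\ast}(\RR^d)$, which is the assertion.
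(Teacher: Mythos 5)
Your overall route is the paper's: invert $a$ off a neighbourhood of the origin, run the recursion $b_j=-b_0\sum_{0<|\gamma|\le j}\frac{1}{\gamma!}\partial^\gamma_\xi b_{j-|\gamma|}D^\gamma_x a$, sum the formal series inside the calculus of \cite{BojanS}, and conclude from $b(x,D)a(x,D)=\mathrm{Id}+T$ that $u=b(x,D)v-Tu\in\SSS^*(\RR^d)$; the recursion and the final deduction coincide with Theorem \ref{parametrix} and the paper's proof of Theorem \ref{hypothm}. The genuine gap is where you assert that the Leibniz and Fa\`a di Bruno formulae ``together with $ii)$ and property $(M.2)$'' give the symbol estimates for $b_0=\chi/a$, and that the $A_p$-bounds for the $b_j$ ``are preserved thanks to $(M.2)$''. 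This step is the technical core of the paper (Lemmas \ref{tec}, \ref{zkk}, \ref{lpc}, \ref{rnc}), and $(M.2)$ is not the property that makes it work. Two separate difficulties must be overcome. First, inverse-closedness: to bound $D^\alpha_w(1/a)$ inductively one needs the gain ${\alpha\choose\beta}A_{\alpha-\beta}A_{\beta}\leq |\alpha|A_{|\alpha|-1}$ of Lemma \ref{zkk}, which rests on $(M.4)$ (equivalently, on the Bruna--Rudin almost-increasing condition for $(A_p/p!)^{1/p}$, cf.\ \cite{Bruna}, \cite{Rudin}), not on $(M.2)$; with only $A_{\alpha-\beta}A_\beta\le A_\alpha$ the inductive constants are uncontrolled. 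Second, and harder, uniformity in $j$: for $\sum_j b_j$ to lie in $FS^{*,\infty}_{A_p,\rho}(\RR^{2d})$ one needs $|D^{\alpha}_w b_j(w)|\leq C h^{|\alpha|+2j}A_{|\alpha|+2j}|b_0(w)|\langle w\rangle^{-\rho(|\alpha|+2j)}$ with a \emph{single} constant $C$ for all $j$ and $\alpha$ (and, in the Beurling case, for every $h$). A naive induction produces factors like $C_{h_1}^{j}$ or $C_{h_1}^{|\alpha|}$, which destroy the estimate. The paper neutralizes this by invoking $(M.3)'$, which gives $A_p/(pA_{p-1})\to\infty$, to choose $s$ with $C_{h_1}^2 s'A_{s'-1}\leq A_{s'}$ for $s'\geq s$, so that the constants stabilize at $C_{h_1}^{2s+1}$; nothing in your sketch performs this absorption.

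A smaller point: you mislocate the main obstacle. The realization of the formal sum as a genuine symbol is not delicate here; it is exactly Proposition \ref{exp} (Theorem 4 of \cite{BojanS}), available off the shelf once membership of $\sum_j b_j$ in $FS^{*,\infty}_{A_p,\rho}(\RR^{2d})$ is proved, so your hand-made cut-offs $\psi_j$ are unnecessary. Likewise the ``reciprocal-weight class $\widetilde\Gamma$'' is superfluous: condition \eqref{dd1} gives $|1/a|\leq c^{-1}e^{M(m|x|)+M(m|\xi|)}$, and a bound of this form is precisely the growth that $\Gamma^{*,\infty}_{A_p,\rho}(\RR^{2d})$ and $FS^{*,\infty}_{A_p,\rho}(\RR^{2d})$ already permit, so the parametrix lives in the original class and Theorem \ref{composition} applies directly. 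Had it really been a different class, your appeal to the calculus of \cite{BojanS} for it would itself require proof.
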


\begin{remark}
Note that the symbols of the form $\langle(x, \xi)\rangle^k$ (for $k$ real) work well as  hypoelliptic symbols in the case of the Gevrey sequence $M_p=p!^s, s>1.$ In the case $M_p=p!^s, s>2,$ symbols of the form $e^{\langle (x,\xi)\rangle^{1/s}}$ satisfy the conditions \eqref{dd1}, \eqref{dd2} (cf. \cite[Section 5]{PT} for other examples of hypoelliptic operators in another context). The more sophistic analysis for $s>1$ will be considered separately in a forthcoming paper.
 
In \cite{GPR} the authors characterize  Gelfand-Shilov spaces through the Fourier expansions of their elements by the eigenfunctions of a positive globally elliptic Shubin type operator, cf. \cite{Sh}, and  the sub-exponential growth with eigenvalues
 of  the corresponding Fourier coefficients. With this, one can  verify that the lower bound assumption \eqref{dd1} is sharp if we consider operators of the form $\exp (-P^{1/ms})u := \sum_{j=1}^\infty e^{-\lambda_j^{1/ms}}u_j \varphi_j,$ where $P$ is a positive globally elliptic Shubin differential operator of order $m$, $\lambda_j$ are its eigenvalues, $\{\varphi_j\}_{j \in \NN}$ is an orthonormal basis of eigenfunctions of $P$ and $u_j$ denote the Fourier coefficients of $u$.
\end{remark}

The proof of Theorem \ref{hypothm} is based on the construction of a parametrix for a $\Gamma^{*,\infty}_{A_p,\rho}$-hypoelliptic operator.
To perform this step we use the global calculus developed in \cite{BojanS}. In Section \ref{sec1} we recall some facts about this calculus. Section \ref{sec2} is devoted to the construction of the parametrix and to the proof of Theorem \ref{hypothm}.

\section{Pseudodifferential operators on $\mathcal{S}^{*}(\RR^d), \mathcal{S}^{*\prime}(\RR^d)$} \label{sec1}
In this section we recall some facts about the pseudodifferential calculus for operators with symbols in $\Gamma^{\ast, \infty}_{A_p, \rho}(\RR^{2d})$ which will be used in the proofs of the next section. Since the statements below are proved in \cite{BojanS} for slightly more general classes of symbols, we prefer to report here the same results as they should be read for the class $\Gamma^{\ast, \infty}_{A_p, \rho}(\RR^{2d})$ in order to make the paper self-contained. For proofs and further details we refer to \cite{BojanS}.
First we recall the notion of asymptotic expansion, cf. \cite[Definition 2]{BojanS}.

\begin{definition}
Let $M_p$ and $A_p$ be as in the definition of $\Gamma_{A_p, \rho}^{*, \infty}(\RR^{2d})$ and let $m_0=0, m_p=M_p/M_{p-1}, p \in \ZZ_+$. We denote by $FS^{\ast, \infty}_{A_p, \rho}(\RR^{2d})$ the space of all formal sums $\sum_{j \in \NN}a_j$
such that for some $B>0$, $a_j \in \mathcal{C}^{\infty}(\textrm{int} \, Q_{Bm_j}^c)$ and satisfy the following condition: there exists $m>0$ such that for every $h >0$ (resp. there exists $h >0$ such that for every $m>0$) we have
$$\sup_{j \in \NN} \sup_{\alpha, \beta \in \NN^d} \sup_{(x,\xi) \in Q^c_{Bm_j}} \frac{|D_{\xi}^{\alpha}D_{x}^{\beta}a_j(x,\xi)|\langle (x,\xi)\rangle^{\rho(|\alpha+\beta|+2j)}e^{-M(m|x|)-M(m|\xi|)}}{h^{|\alpha+\beta|+2j}A_{\alpha}A_\beta A_j^2}<\infty.$$
\end{definition}

Notice that any symbol $a \in \Gamma^{\ast, \infty}_{A_p, \rho}(\RR^{2d})$ can be regarded as an element $\sum\limits_{j \in \NN}a_j$ of $FS^{\ast, \infty}_{A_p, \rho}(\RR^{2d})$ with $a_0=a, a_j =0$ for $j \geq 1.$

\begin{definition}
\label{asexp}
A symbol $a \in \Gamma^{\ast, \infty}_{A_p, \rho}(\RR^{2d})$ is equivalent to $\sum_{j \in \NN}a_j \in FS^{\ast, \infty}_{A_p, \rho}(\RR^{2d})$ (we write $a \sim \sum_{j \in \NN}a_j $ in this case) if there exist $m,B >0$ such that for every $h >0$ (resp. there exist $h,B>0$ such that for every $m >0$) the following condition holds:
$$\sup_{N \in \ZZ_+} \sup_{\alpha, \beta \in \NN^d} \sup_{(x,\xi) \in Q^c_{Bm_N}} \frac{\Big|D_{\xi}^{\alpha}D_{x}^{\beta}\big(a(x,\xi) - \sum\limits_{j<N}a_j(x,\xi)\big)\Big|e^{-M(m|x|)-M(m|\xi|)}}{h^{|\alpha+\beta|+2N}A_{\alpha}A_\beta A_N^2 \langle (x,\xi)\rangle^{-\rho(|\alpha+\beta|+2N)}}<\infty.$$
\end{definition}

In \cite{BojanS} it was proved that if $a \sim 0$, then the operator $a(x,D)$ is $\ast$-regularizing, i.e. it extends to a continuous map from $\mathcal{S}^{\ast \prime}(\RR^d)$ to $\mathcal{S}^{\ast}(\RR^d)$.
Moreover we have the following result, cf. \cite[Theorem 4]{BojanS}.

\begin{proposition}\label{exp}
Let $\sum_{j \in \NN}a_j \in FS^{\ast, \infty}_{A_p, \rho}(\RR^{2d})$. Then there exists a symbol $a \in \Gamma^{\ast, \infty}_{A_p, \rho}(\RR^{2d})$ such that $a \sim \sum_{j \in \NN}a_j.$
\end{proposition}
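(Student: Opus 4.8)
The plan is to construct $a$ by the classical Borel-type summation, adapted to the present global ultradifferentiable framework: I cut each $a_j$ off on a region that escapes to infinity as $j\to\infty$ and sum the pieces, so that the resulting series is locally finite while every truncation error is governed by the formal-symbol bound defining $FS^{*,\infty}_{A_p,\rho}(\RR^{2d})$. Concretely, since $A_p$ satisfies $(M.1)$, $(M.2)$ and the non-quasianalyticity condition $(M.3)'$, there is a cut-off $\chi$ in the class determined by $A_p$ with $\chi\equiv1$ on $\{|t|\le1\}$, $\supp\chi\subset\{|t|\le2\}$ and $|D^\gamma\chi(t)|\le CL^{|\gamma|}A_\gamma$ (for every $L$ in the Beurling case, for some $L$ in the Roumieu case). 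Writing $m_j=M_j/M_{j-1}$ and fixing a large $R\ge B$, I set $\varphi_j(x,\xi)=1-\chi(x/(Rm_j))\chi(\xi/(Rm_j))$, so that $\varphi_j$ vanishes on the box where $|x|\le Rm_j$ and $|\xi|\le Rm_j$, equals $1$ as soon as $|x|\ge2Rm_j$ or $|\xi|\ge2Rm_j$, and satisfies $\supp\varphi_j\subset\mathrm{int}\,Q^c_{Bm_j}$ together with $|D^\gamma\varphi_j|\le C(L/(Rm_j))^{|\gamma|}A_\gamma$. Thus each $\varphi_ja_j$ extends to a function in $C^\infty(\RR^{2d})$, and I would define $a=\sum_{j}\varphi_ja_j$. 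Because $m_j\to\infty$, near any fixed point all but finitely many $\varphi_j$ vanish identically, so the sum is locally finite and $a\in C^\infty(\RR^{2d})$.

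Both required statements, membership $a\in\Gamma^{*,\infty}_{A_p,\rho}(\RR^{2d})$ and the equivalence $a\sim\sum_ja_j$, reduce to a single tail estimate. For the equivalence I would split $a-\sum_{j<N}a_j=\sum_{j<N}(\varphi_j-1)a_j+\sum_{j\ge N}\varphi_ja_j$. Each summand $(\varphi_j-1)a_j$ with $j<N$ is supported in the box where $|x|\le2Rm_j$ and $|\xi|\le2Rm_j$; since $m_j\le m_{N-1}\le m_N$ by the monotonicity of the ratios $m_k$ (a consequence of $(M.1)$), this box lies in $Q_{3Rm_N}$, so the first sum vanishes identically on $Q^c_{3Rm_N}$. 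Hence on $Q^c_{3Rm_N}$ the remainder equals the tail $\sum_{j\ge N}\varphi_ja_j$, and the case $N=0$ recovers $a$ itself; it therefore suffices to estimate the derivatives of this tail by the right-hand side of the bound in Definition \ref{asexp}, uniformly in $N$ and in $(x,\xi)$.

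The tail estimate is where the main difficulty lies. After Leibniz's rule, with the derivatives falling on $\varphi_j$ absorbed using $|D^\gamma\varphi_j|\le C(L/(Rm_j))^{|\gamma|}A_\gamma$, the inequality $Rm_j\ge1$, $\rho\le1$, and $(M.2)$ to recombine the $A$-factors into $A_\alpha A_\beta$, each term of the tail is controlled by the formal-symbol bound for $a_j$, and everything reduces to showing that $\sum_{j\ge N}h^{2j}A_j^2\langle(x,\xi)\rangle^{-2\rho j}\le C\,h^{2N}A_N^2\langle(x,\xi)\rangle^{-2\rho N}$ on $\supp\varphi_j$, where $\langle(x,\xi)\rangle\ge Rm_j$. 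The ratio of the $(j+1)$-th term to the $j$-th is $h^2(A_{j+1}/A_j)^2\langle(x,\xi)\rangle^{-2\rho}\le h^2R^{-2\rho}\big((A_{j+1}/A_j)/m_{j+1}^\rho\big)^2$, so the series is geometric with ratio $<1$ as soon as $R$ is taken large (relative to $h$ and the constants of the inclusion), provided the quotients $(A_k/A_{k-1})/m_k^\rho$ stay bounded. The crux, and the step I expect to be hardest, is thus to translate the inclusion $A_p\subset M_p^\rho$ into the ratio control $A_k/A_{k-1}\le C\,(M_k/M_{k-1})^\rho$; this is the place where the log-convexity $(M.1)$ of both sequences and the defining role of $\rho$ (chosen $\ge\rho_0$, strictly if the infimum is not attained) must be used carefully, since the mere sequence-level bound does not trivially pass to the ratios. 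Once this geometric bound is in hand, convergence is uniform in $N$ and $(x,\xi)$, the exponential weight $e^{M(m|x|)+M(m|\xi|)}$ is carried along unchanged with the $m$ furnished by the formal sum (in the Beurling case the binding constraint is $h$ small, in the Roumieu case $h$ is the single index provided), and both conclusions follow with $B=3R$.
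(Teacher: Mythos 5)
Your construction is the standard excision--summation argument, and it is in substance the proof behind this statement: the paper itself does not prove Proposition \ref{exp} but quotes it from \cite[Theorem 4]{BojanS}, where exactly this scheme (cut-offs $\varphi_j$ supported in $Q^c_{Rm_j}$, a locally finite sum, the splitting $a-\sum_{j<N}a_j=\sum_{j<N}(\varphi_j-1)a_j+\sum_{j\geq N}\varphi_j a_j$, and a geometric tail estimate) is carried out. Your support bookkeeping, the choice $B=3R$, and the reduction to the tail bound are all correct. The genuine gap is at the point you yourself flag as the crux: you never prove the ratio bound $A_k/A_{k-1}\leq K\,(M_k/M_{k-1})^{\rho}$, and the tools you propose for proving it are insufficient. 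Log-convexity $(M.1)$ of both sequences together with the inclusion $A_p\leq c_0L^pM_p^{\rho}$ (and any relation between $\rho$ and $\rho_0$) does \emph{not} imply that $(A_k/A_{k-1})/m_k^{\rho}$ stays bounded: for instance, take $\log m_k^{\rho}=k$ and let $\log(A_k/A_{k-1})$ equal $k$ except on the blocks $2^n\leq k<2^n+n$, where it equals $2^n+n$; both sequences are log-convex, the accumulated excess in $\log A_p$ is $O((\log p)^3)$ so the sequence-level inclusion holds with any $L>1$, yet the ratio quotient is $\geq e^{n}$ at $k=2^n$, hence unbounded. So no amount of "careful use of $(M.1)$ and the defining role of $\rho$" can close the argument.

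What closes it is condition $(M.2)$ for $A_p$, which you use elsewhere (to recombine factors) but not here, and which your counotherexample-free toolkit omits. Since $(M.1)$ makes $A_k/A_{k-1}$ non-decreasing, one has $(A_p/A_{p-1})^p\leq\prod_{k=p+1}^{2p}(A_k/A_{k-1})=A_{2p}/A_p$, while $(M.2)$ gives $A_{2p}\leq c_AH_A^{2p}A_p^2$; hence $A_p/A_{p-1}\leq c_A^{1/p}H_A^2A_p^{1/p}$. Combining with the inclusion, $A_p^{1/p}\leq c_0^{1/p}L\,M_p^{\rho/p}$, and with $(M.1)$ for $M_p$, which gives $M_p^{1/p}\leq m_p$, you get $A_p/A_{p-1}\leq c_Ac_0H_A^2L\,m_p^{\rho}$, which is exactly the uniform ratio control you need. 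With this lemma inserted your tail series is geometric and the argument goes through. One further quantifier point: in the Beurling case the constants $B=3R$ and the cut-off $\chi$ are fixed once and for all, while the estimate must hold for \emph{every} $h>0$; so the geometric ratio must be made small by shrinking the parameter $h'$ fed into the formal-sum estimate and the constant $L$ in the bound for $\chi$ (possible because $\chi\in\mathcal{D}^{(A_p)}$ satisfies its estimates for every $L$), not by "taking $R$ large relative to $h$" as you write; enlarging $R$ relative to the given $h_1$ is the correct move only in the Roumieu case, where $h_1$ is known before the construction.
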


Finally we recall the following composition theorem, cf. \cite[Corollary 1]{BojanS}.

\begin{theorem}\label{composition}
Let $a,b \in \Gamma^{\ast, \infty}_{A_p, \rho}(\RR^{2d})$ with asymptotic expansions $a \sim \sum\limits_{j \in \NN}a_j$ and
$b \sim \sum\limits_{j \in \NN}b_j$. Then there exists $c \in \Gamma^{\ast, \infty}_{A_p, \rho}(\RR^{2d})$ and a $\ast$-regularizing operator $T$
such that $a(x,D)b(x,D) = c(x,D)+T.$ Moreover $c$ has the following asymptotic expansion
$$c(x,\xi) \sim \sum_{j \in \NN} \sum_{s+k+l=j} \sum_{|\alpha|=l} \frac{1}{\alpha!}\partial_\xi^\alpha a_s(x,\xi) D_x^{\alpha} b_k(x,\xi).$$
\end{theorem}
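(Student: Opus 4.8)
The plan is to follow the classical scheme for composing pseudodifferential operators, adapted to the infinite-order ultradifferentiable setting: compute the symbol of the product as an oscillatory integral, extract its asymptotic expansion by Taylor's formula, and reduce the discrepancy with the expansion to a $\ast$-regularizing remainder. Applying \eqref{pseudodif} twice and computing the Schwartz kernel, one finds that $a(x,D)b(x,D)$ is again an operator of the form \eqref{pseudodif}, with symbol
\beq\label{osc}
\sigma(x,\xi)=(2\pi)^{-d}\iint e^{-i\langle y,\theta\rangle}\,a(x,\xi+\theta)\,b(x+y,\xi)\,dy\,d\theta .
\eeq
Since $a$ and $b$ grow exponentially in the covariable (they are of infinite order), \eqref{osc} does not converge absolutely and must first be interpreted as an oscillatory integral, e.g. by inserting the regularizing factors $\langle y\rangle^{-2L}\langle\theta\rangle^{-2L}$ and letting $L\to\infty$; one checks that the limit exists, is independent of the regularization, and depends continuously on $a,b$.

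Next I would identify the formal expansion. Taylor-expanding $a(x,\xi+\theta)$ in $\theta$, writing $\theta^\alpha e^{-i\langle y,\theta\rangle}=(-D_y)^\alpha e^{-i\langle y,\theta\rangle}$, integrating by parts in $y$, and using $(2\pi)^{-d}\int e^{-i\langle y,\theta\rangle}\,d\theta=\delta(y)$, each monomial of the Taylor polynomial contributes $\frac{1}{\alpha!}\partial_\xi^\alpha a(x,\xi)\,D_x^\alpha b(x,\xi)$. Inserting the given expansions $a\sim\sum_s a_s$, $b\sim\sum_k b_k$ and grouping the terms with $s+k+|\alpha|=j$ yields exactly the formal sum $\sum_{j}c_j$ with $c_j=\sum_{s+k+l=j}\sum_{|\alpha|=l}\frac{1}{\alpha!}\partial_\xi^\alpha a_s\,D_x^\alpha b_k$ appearing in the statement. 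One then verifies $\sum_j c_j\in FS^{\ast,\infty}_{A_p,\rho}(\RR^{2d})$: after applying the Leibniz rule to each product $\partial_\xi^\alpha a_s\,D_x^\alpha b_k$, the arising factorials are controlled by $(M.1)$ and $(M.2)$ for $A_p$, the $\rho$-loss $\langle(x,\xi)\rangle^{-\rho|\cdot|}$ is tracked factor by factor, the embedding $A_p\subset M_p^\rho$ together with $(M.4)$ handles the $1/\alpha!$ gain, and the two weights are recombined using the subadditivity of the associated function $M(\cdot)$. By Proposition \ref{exp} there is a genuine symbol $c\in\Gamma^{\ast,\infty}_{A_p,\rho}(\RR^{2d})$ with $c\sim\sum_j c_j$.

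It then remains to show that $\sigma\sim\sum_j c_j$ as well, for then $\sigma-c\sim 0$ and the operator $T:=a(x,D)b(x,D)-c(x,D)=(\sigma-c)(x,D)$ is $\ast$-regularizing, which is exactly the claim. To this end one estimates, for each $N$, the difference between $\sigma$ in \eqref{osc} and $\sum_{j<N}c_j$; by the previous step this equals the contribution of the $N$-th order Taylor remainder $N\sum_{|\alpha|=N}\frac{\theta^\alpha}{\alpha!}\int_0^1(1-t)^{N-1}\partial_\xi^\alpha a(x,\xi+t\theta)\,dt$ inserted into the oscillatory integral. Integrating by parts in both $y$ and $\theta$ to create decay, and --- crucially --- letting the order $N=N(x,\xi)$ grow proportionally to $\langle(x,\xi)\rangle^{\rho}$ (equivalently, tying $N$ to the scale $m_N\approx\langle(x,\xi)\rangle$ used in Definition \ref{asexp}), one converts the polynomial gain $\langle(x,\xi)\rangle^{-2\rho N}$ into the super-exponential decay required by the defining estimate of $\sim$, after balancing it against the factorial growth $A_N^2$ and re-expressing everything through the weights $e^{-M(m|x|)-M(m|\xi|)}$ via the definition of $M(\cdot)$.

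The main obstacle is precisely this last remainder estimate. In contrast with the $C^\infty$ calculus, where the Taylor expansion is stopped at a fixed order, the ultradifferentiable structure forces the truncation order $N$ to depend on the point $(x,\xi)$, and one must simultaneously regularize the oscillatory integral against the exponential growth of the infinite-order symbols in $\theta$, dominate the factorial proliferation of derivatives governed by $A_p$, and optimize $N$ against $\langle(x,\xi)\rangle$. The hypotheses $(M.1)$--$(M.3)$ on $M_p$ and $A_0=A_1=1$, $(M.1),(M.2),(M.3)',(M.4)$ on $A_p$, together with $A_p\subset M_p^\rho$, are exactly what make this optimization close and produce the super-exponential decay that characterizes a $\ast$-regularizing operator.
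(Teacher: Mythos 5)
You should first note that the paper does not prove this theorem at all: it is stated as a quotation, ``cf.\ \cite[Corollary 1]{BojanS}'', and the authors explicitly defer all proofs of Section \ref{sec1} to \cite{BojanS}. There the composition result is not obtained by the direct Kumano-go double-symbol route you sketch, but as a corollary of a calculus for operators defined by \emph{amplitudes} $a(x,y,\xi)$: one rewrites $b(x,D)$ in right (adjoint) form, so that the composition becomes a single amplitude operator, and then invokes the amplitude-to-symbol reduction theorem, whose expansion $\sum_\alpha \frac{1}{\alpha!}\partial_\xi^\alpha D_y^\alpha(\cdot)|_{y=x}$ produces exactly the triple sum in the statement. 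Your route is the standard alternative and could in principle be carried out, so the comparison is one of strategy, not of substance --- but only if the hard analytic core were actually supplied, and it is not.

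The genuine gap is your treatment of the oscillatory integral defining the composed symbol. You propose to regularize $(2\pi)^{-d}\iint e^{-i\langle y,\theta\rangle}a(x,\xi+\theta)b(x+y,\xi)\,dy\,d\theta$ by inserting $\langle y\rangle^{-2L}\langle\theta\rangle^{-2L}$ and ``letting $L\to\infty$''. This is not a definition, and no fixed finite $L$ can work: since $a$ is of infinite order, $|a(x,\xi+\theta)|$ may grow like $e^{M(m|\theta|)}$, which by the basic property of the associated function dominates \emph{every} polynomial in $\theta$, so finitely many integrations by parts in $y$ (the only mechanism producing $\langle\theta\rangle^{-2L}$ decay) never yield absolute convergence. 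What is actually required --- and what constitutes the bulk of the work in \cite{BojanS} --- is infinite-order integration by parts via ultradifferential operators of class $\ast$, i.e.\ operators $P(D_y)$ with $|P(\theta)|\geq c\,e^{M(m|\theta|)}$, whose existence rests on $(M.2)$ and whose use requires controlling the $A_p$-growth of the resulting derivatives of $b$; in the Roumieu case even constructing a suitable $P$ with the right uniformity in $m$ is nontrivial. Alternatively, one can optimize the number of integrations by parts as a function of $|\theta|$, but then one must verify quantitatively that the decay $e^{-\rho M(c|\theta|^{1/\rho})}$ so obtained beats $e^{M(m|\theta|)}$, which hinges on $(M.2)$ and on the interplay of the Beurling/Roumieu quantifiers --- none of which appears in your sketch. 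The same machinery is needed again in the remainder estimate, which you correctly identify as ``the main obstacle'' but then only describe qualitatively (choosing $N$ with $Bm_N\leq\langle(x,\xi)\rangle$, balancing $\langle(x,\xi)\rangle^{-2\rho N}$ against $h^{2N}A_N^2$). In short, your outline reproduces the correct formal expansion and the correct overall architecture, but it stops exactly at the point where all the analytic content of the theorem lies, and the one concrete technical device you do name is insufficient for symbols of infinite order.
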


%%%%%%%%%%%%%%%%%%%%%%%%%%%%%%%%%%%%%%%%%%%%%%%%%%%%%%%%%%%%%%%%%%%%%%%%%%%%%%%%%%%%%%%%%%%%%%%%%%%%%%%%%%%%%%%%%%%%%%%%%%%%%%%%%%%%%%%%%%%%%%%%

\section{Hypoellipticity and parametrix}\label{sec2}
In this section we construct the symbol of a left (and right) parametrix for a $\Gamma^{\ast, \infty}_{A_p, \rho}$-hypoelliptic operator starting from the asymptotic expansion of the symbol and using the symbolic calculus developed in \cite{BojanS}. To do this we need some preliminary results.

\begin{lemma}\label{tec}
Let $M_p$ be a sequence of positive numbers satisfying $(M.4)$ and $M_0=M_1=1$. Then for all $2\leq q\leq p$, $\ds\left(\frac{M_q}{q!}\right)^{1/(q-1)}\leq \left(\frac{M_p}{p!}\right)^{1/(p-1)}$.
\end{lemma}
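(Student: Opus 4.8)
The plan is to reduce the statement to a convexity property of an auxiliary sequence and then invoke the elementary fact that the running average of a non-decreasing sequence is itself non-decreasing. First I would set $N_p := M_p/p!$ for $p \in \NN$ and observe that condition $(M.4)$ reads precisely $N_p^2 \leq N_{p-1} N_{p+1}$ for every $p \in \ZZ_+$; that is, the sequence $(N_p)$ is log-convex. Since all the $M_p$ are positive, I may pass to logarithms and put $b_p := \log N_p$, so that $b_0 = b_1 = 0$ (because $M_0 = M_1 = 1$) and, by log-convexity, $b_{p+1} - b_p \geq b_p - b_{p-1}$ for all $p \in \ZZ_+$. After taking logarithms, the claim $(M_q/q!)^{1/(q-1)} \leq (M_p/p!)^{1/(p-1)}$ for $2 \leq q \leq p$ becomes equivalent to the monotonicity of
\[
c_p := \frac{b_p}{p-1}, \qquad p \geq 2,
\]
namely $c_q \leq c_p$ whenever $2 \leq q \leq p$.

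Next I would record the increments $d_k := b_k - b_{k-1}$ and note two facts: $d_1 = b_1 - b_0 = 0$, and the convexity inequality above says exactly that $(d_k)_{k \geq 1}$ is non-decreasing. Summing telescopically and using $b_1 = 0$ gives $b_p = \sum_{k=2}^p d_k$ for $p \geq 2$, so that $c_p$ is precisely the arithmetic mean of the $p-1$ numbers $d_2, \ldots, d_p$. Since these are arranged in non-decreasing order, each newly added term dominates the current average, i.e. $d_{p+1} \geq d_p \geq c_p$; hence
\[
c_{p+1} = \frac{(p-1)c_p + d_{p+1}}{p} \geq \frac{(p-1)c_p + c_p}{p} = c_p,
\]
which establishes $c_p \leq c_{p+1}$ and, by chaining, $c_q \leq c_p$ for all $2 \leq q \leq p$. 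Equivalently, one can verify the single step directly by writing $(p-1)b_{p+1} - p\, b_p = \sum_{k=2}^p (d_{p+1} - d_k) \geq 0$.

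I do not expect a serious obstacle here: once the problem is recast through the substitution $N_p = M_p/p!$ and the logarithm, it is a clean instance of the principle that running averages of an increasing sequence increase. The only points that require a little care are checking that $(M.4)$ is genuinely a log-convexity statement for $(N_p)$, and that the normalization $M_0 = M_1 = 1$ forces the base increment $d_1 = 0$, which is what makes $c_p$ an honest average starting from index $2$ and lets the averaging argument run without boundary corrections.
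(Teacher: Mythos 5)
Your proof is correct. It rests on the same initial reduction as the paper's --- setting $N_p = M_p/p!$ and reading $(M.4)$ as log-convexity of $(N_p)$ --- but from there the two arguments diverge in technique. The paper stays in multiplicative form and proves the single step $N_p^{1/(p-1)} \leq N_{p+1}^{1/p}$ by induction on $p$: the inductive hypothesis $N_p^{p} \leq N_{p+1}^{p-1}$ is combined with log-convexity and with a telescoping product of the decreasing ratios $N_{k}/N_{k+1}$ (using $N_1=1$) to close the induction. You instead pass to logarithms, where the claim becomes monotonicity of $c_p = b_p/(p-1)$, i.e.\ of the running averages of the increments $d_k = b_k - b_{k-1}$; since $(M.4)$ makes $(d_k)$ non-decreasing and the normalization $M_0=M_1=1$ gives $d_1=0$, the inequality $d_{p+1} \geq c_p$ yields $c_{p+1} \geq c_p$ directly, with no inductive hypothesis needed (the chaining $c_q \leq c_p$ is trivial). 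All your steps check out, including the identity $(p-1)b_{p+1} - p\,b_p = \sum_{k=2}^{p}(d_{p+1}-d_k) \geq 0$. What your route buys is transparency: the statement is exposed as the elementary principle that running averages of a non-decreasing sequence increase, and the role of each hypothesis ($(M.4)$ as convexity, $M_0=M_1=1$ as the vanishing base increment) is isolated cleanly. What the paper's route buys is that it never leaves the multiplicative language of the conditions $(M.1)$--$(M.4)$, so it reads uniformly with the surrounding estimates, at the cost of less obvious algebraic manipulations.
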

\begin{proof}
For brevity in notation put $N_p=M_p/p!$. Then $N_0=N_1=1$ and $N_p$ satisfies $(M.1)$. Morever the sequence $N_{p-1}/N_p$ is monotonically decreasing. It is enough to prove that $N_p^{1/(p-1)}\leq N_{p+1}^{1/p}$ for $p\geq 2$, $p\in\NN$. The proof goes by induction. For $p=2$ one easily verifies this. Assume that it holds for some $p\geq 2$. Then we have
\beqs
N_{p+1}^{2p+2}&\leq& N_p^{p+1}N_{p+2}^{p+1}\leq N_p N_{p+1}^{p-1} N_{p+2}^{p+1}=N_{p+2}^{2p} N_p\left(\frac{N_{p+1}}{N_{p+2}}\right)^{p-1}\\
&\leq&N_{p+2}^{2p} N_p\frac{N_{p-1}}{N_{p}}\cdot...\cdot \frac{N_{1}}{N_2}=N_{p+2}^{2p},
\eeqs
from which the desired inequality follows.
\end{proof}

\begin{lemma}\label{zkk}
Let $M_p$ satisfy $(M.4)$ and $M_0=M_1=1$. Then for all $\alpha,\beta\in\NN^d$ such that $\beta\leq \alpha$ and $1\leq |\beta|\leq|\alpha|-1$ the inequality $\ds {\alpha\choose\beta}M_{\alpha-\beta}M_{\beta}\leq |\alpha|M_{|\alpha|-1}$ holds.
\end{lemma}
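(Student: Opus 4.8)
The plan is to reduce the multi-index inequality to a one-dimensional statement about the normalized sequence $N_p := M_p/p!$ and then to exploit the log-convexity of $N_p$ that is encoded in $(M.4)$. First I would use the convention $M_\alpha = M_{|\alpha|}$ to observe that the left-hand side depends on $\alpha,\beta$ only through $p:=|\alpha|$, $q:=|\beta|$ and the combinatorial factor $\binom{\alpha}{\beta}$. The Vandermonde identity $\sum_{\beta\le\alpha,\,|\beta|=q}\binom{\alpha}{\beta}=\binom{p}{q}$ gives the termwise bound $\binom{\alpha}{\beta}\le\binom{p}{q}$, so it suffices to prove the scalar inequality $\binom{p}{q}M_{p-q}M_q\le p\,M_{p-1}$ for $1\le q\le p-1$. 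Note the hypotheses $1\le|\beta|\le|\alpha|-1$ force both $q\ge 1$ and $p-q\ge 1$, so all occurring indices are admissible.

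Next I would divide both sides by $p!$. Since $\binom{p}{q}/p! = 1/(q!\,(p-q)!)$ and $p/p! = 1/(p-1)!$, with $N_p := M_p/p!$ the inequality becomes exactly $N_q\,N_{p-q}\le N_{p-1}$. Thus the entire statement is equivalent to this clean inequality for $N_p$, a sequence satisfying $N_0=N_1=1$.

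The heart of the argument is then to establish $N_q N_{p-q}\le N_{p-1}$ by convexity. Condition $(M.4)$ reads $N_p^2\le N_{p-1}N_{p+1}$, which says precisely that $j\mapsto\log N_j$ is convex. Because $1\le q\le p-1$ and $1\le p-q\le p-1$, the integer pair $(q,p-q)$ has the same sum as $(1,p-1)$ and is majorized by it, so convexity of $\log N_j$ yields $\log N_q+\log N_{p-q}\le\log N_1+\log N_{p-1}$; using $N_1=1$ this is the desired bound. I would make this explicit by telescoping: writing $d_j=\log N_j-\log N_{j-1}$, the normalization gives $d_1=0$ and convexity gives $d_1\le d_2\le\cdots$, whence $\log N_{p-1}-\log N_{p-q}=\sum_{j=p-q+1}^{p-1}d_j$ dominates $\log N_q=\sum_{j=2}^{q}d_j$ term by term through the pairing $d_{j+1}\le d_{p-q+j}$, which is valid exactly because $q\le p-1$.

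I expect the only genuinely delicate point to be the reduction step: one must ensure that replacing the multi-index binomial by $\binom{p}{q}$ does not break the inequality, which is why I would lean on the Vandermonde bound rather than any coordinatewise estimate. Once the problem is scalarized, the log-convexity supplied by $(M.4)$ makes the remainder routine; alternatively, Lemma~\ref{tec} (the monotonicity of $N_p^{1/(p-1)}$) could drive the same conclusion if one preferred a multiplicative telescoping to the majorization phrasing.
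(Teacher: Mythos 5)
Your proof is correct, and it differs from the paper's in a meaningful way. The scalarization step is the same: the paper also passes from $\binom{\alpha}{\beta}$ to $\binom{|\alpha|}{|\beta|}$ (quoting the inequality $\binom{\kappa}{\nu}\leq\binom{|\kappa|}{|\nu|}$ directly, where your Vandermonde argument supplies a clean justification). The divergence is in how $(M.4)$ is exploited. The paper does \emph{not} normalize once and for all; instead it applies Lemma~\ref{tec} twice, bounding $M_{\alpha-\beta}/(|\alpha|-|\beta|)!$ and $M_{\beta}/|\beta|!$ by powers $\left(M_{|\alpha|-1}/(|\alpha|-1)!\right)^{(|\alpha|-|\beta|-1)/(|\alpha|-2)}$ and $\left(M_{|\alpha|-1}/(|\alpha|-1)!\right)^{(|\beta|-1)/(|\alpha|-2)}$ whose exponents sum to $1$. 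Since Lemma~\ref{tec} requires both indices to be at least $2$, the paper must split off the boundary cases $|\beta|=1$ and $|\beta|=|\alpha|-1$, which are then handled by the trivial observation $M_1=1$, $\binom{\alpha}{\beta}\le|\alpha|$. Your route — reduce to $N_qN_{p-q}\le N_{p-1}$ for $N_p=M_p/p!$ and prove it by majorization/telescoping of the increments of $\log N_j$ — treats all $1\le q\le p-1$ uniformly, needs no case split, and does not invoke Lemma~\ref{tec} at all. What each buys: the paper's version is shorter \emph{given} Lemma~\ref{tec}, which it has already proved and reuses later (in Lemma~\ref{rnc}); yours is self-contained and makes the underlying mechanism (log-convexity of $N_p$ plus $N_1=1$, i.e.\ Karamata-type comparison of $(q,p-q)$ with $(1,p-1)$) fully explicit, which also explains \emph{why} the boundary cases are not really exceptional.
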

\begin{proof} We will consider two cases.\\
\indent\underline{Case 1.} $2\leq |\beta|\leq |\alpha|-2$.\\
If we use Lemma \ref{tec} and the inequality $\ds {\kappa\choose\nu}\leq {|\kappa|\choose|\nu|}$ for $\nu\leq \kappa$, $\kappa,\nu\in \NN^d$, we have
\beqs
{\alpha\choose\beta}M_{\alpha-\beta}M_{\beta}&\leq&|\alpha|! \cdot\frac{M_{\alpha-\beta}}{(|\alpha|-|\beta|)!}\cdot \frac{M_{\beta}}{|\beta|!}\\
&\leq&|\alpha|!\cdot\left(\frac{M_{|\alpha|-1}}{(|\alpha|-1)!}\right)^{\frac{|\alpha|-|\beta|-1}{|\alpha|-2}}\cdot \left(\frac{M_{|\alpha|-1}}{(|\alpha|-1)!}\right)^{\frac{|\beta|-1}{|\alpha|-2}}=|\alpha|M_{|\alpha|-1}.
\eeqs
\indent\underline{Case 2.} $|\beta|=1$ or $|\beta|=|\alpha|-1$.\\
Then obviously $\ds{\alpha\choose\beta}M_{\alpha-\beta}M_{\beta}\leq |\alpha|M_{|\alpha|-1}$.
\end{proof}

In the following we assume that $A_p$ satisfies the conditions $(M.1)$, $(M.2)$, $(M.3)'$ and $(M.4)$. Furthermore we suppose that $A_0=A_1=1$.
Because of $(M.3)'$, $A_p/(pA_{p-1})\rightarrow\infty$, when $p\rightarrow\infty$, see \cite{Komatsu1}.
Under these assumptions we can prove the following result.

\begin{lemma}\label{lpc}
Let $a\in\Gamma^{*,\infty}_{A_p,\rho}\left(\RR^{2d}\right)$ be $\Gamma^{*,\infty}_{A_p,\rho}$-hypoelliptic. Then, the function $p_0(x,\xi)=a(x,\xi)^{-1}$ satisfies the following condition: for every $h>0$ there exists $C>0$ (resp. there exist $h,C>0$) such that
\beq\label{nl1}
\left|D^{\alpha}_{\xi}D^{\beta}_x p_0(x,\xi)\right|\leq C\frac{h^{|\alpha|+|\beta|}|p_0(x,\xi)|A_{\alpha+\beta}}{\langle(x,\xi)\rangle^{\rho(|\alpha|+|\beta|)}},\,\, \alpha,\beta\in\NN^d,\, (x,\xi)\in Q^c_B.
\eeq
\end{lemma}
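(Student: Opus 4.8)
The goal is to estimate derivatives of $p_0 = a^{-1}$ in terms of $|p_0| = |a|^{-1}$ using the hypoellipticity hypotheses \eqref{dd1}, \eqref{dd2}. The natural tool is the Faà di Bruno / Leibniz formula applied to the inverse, so the first step is to write down a usable form of the derivatives of $a^{-1}$. Since $p_0 = 1/a$, differentiating the identity $a \cdot p_0 = 1$ gives a recursion: for any multi-index $\gamma = (\alpha,\beta)$ with $|\gamma| \geq 1$,
\[
D^{\gamma} p_0 = -\,p_0 \sum_{0 \neq \delta \leq \gamma} \binom{\gamma}{\delta} (D^{\delta} a)\,(D^{\gamma - \delta} p_0).
\]
This sets up an induction on $|\gamma| = |\alpha| + |\beta|$. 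The plan is to prove \eqref{nl1} by induction on $N = |\alpha|+|\beta|$, using this recursion to reduce the order-$N$ derivative to products of a derivative of $a$ (controlled by \eqref{dd2}) and a strictly lower-order derivative of $p_0$ (controlled by the inductive hypothesis).

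**Carrying out the induction.** The base case $N=0$ is trivial. For the inductive step, I would substitute into the recursion the bound \eqref{dd2} for $|D^{\delta} a|$ — which, crucially, already carries a factor $|a| = |p_0|^{-1}$ that cancels the explicit $p_0$ factor multiplying $D^{\delta} a$, leaving a clean factor $|p_0|$ out front — together with the inductive estimate for $|D^{\gamma-\delta} p_0|$. Collecting the resulting powers of $\langle(x,\xi)\rangle^{-\rho}$ is straightforward since the exponents add up correctly to $\rho N$. The real work is in the combinatorial factor: after summation one must bound an expression of the form
\[
\sum_{0 \neq \delta \leq \gamma} \binom{\gamma}{\delta} A_{\delta}\, A_{\gamma - \delta}
\]
by a constant times $A_{\gamma}$ (where $A_\gamma$ abbreviates $A_{|\gamma|}$), since the target \eqref{nl1} has $A_{\alpha+\beta} = A_{|\gamma|}$ on the right. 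This is where Lemma \ref{zkk} enters: that lemma gives exactly $\binom{\gamma}{\delta} A_{\gamma-\delta} A_{\delta} \leq |\gamma| A_{|\gamma|-1}$ for the interior range $1 \leq |\delta| \leq |\gamma|-1$, and the boundary terms $|\delta| = |\gamma|$ are handled separately. The number of multi-indices $\delta \leq \gamma$ is at most $\prod(\gamma_i+1) \leq 2^{|\gamma|}$, so summing the interior bound contributes a factor like $|\gamma|\, 2^{|\gamma|} A_{|\gamma|-1}$.

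**The main obstacle.** The crux is that Lemma \ref{zkk} produces $A_{|\gamma|-1}$, not the desired $A_{|\gamma|}$, and one carries along an extra $|\gamma|$ and a geometric factor $2^{|\gamma|}$ from the number of summands. To absorb these I would invoke the consequence of $(M.3)'$ recorded just before this lemma, namely that $A_p/(p A_{p-1}) \to \infty$; equivalently, for every $c>0$ there is a constant so that $c^{p}\, p\, A_{p-1} \leq C\, A_p$. This is precisely what converts the factor $|\gamma|\,2^{|\gamma|} A_{|\gamma|-1}$ into a controlled multiple of $A_{|\gamma|}$ while feeding the excess geometric growth into the constant $h^{|\alpha|+|\beta|}$ appearing in \eqref{nl1}. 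The quantifier bookkeeping — "for every $h$ there exists $C$" in the Beurling case versus "there exist $h,C$" in the Roumieu case — must be tracked through the induction so that the induction constant does not blow up with $N$; the standard device is to prove the estimate with a fixed but arbitrarily small auxiliary constant in front of $h$, so the geometric losses at each step are summable. I expect the superlinear growth of $A_p/(pA_{p-1})$ to be exactly the structural fact that makes the $A_\gamma$ bookkeeping close, and getting the $h$-dependence uniform in $N$ to be the only genuinely delicate point.
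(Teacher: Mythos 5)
Your skeleton coincides with the paper's: differentiate $a\,p_0=1$ to get the recursion, induct on the order of differentiation, control $\binom{\gamma}{\delta}A_{\gamma-\delta}A_{\delta}$ by Lemma \ref{zkk}, and absorb the counting factors into a geometric series in $h_1/h$. The gap is exactly at the point you yourself flag as ``the only genuinely delicate point'', and your proposed resolution fails for two reasons. First, your key claim is false: $A_p/(pA_{p-1})\to\infty$ is \emph{not} equivalent to ``for every $c>0$ there is $C$ with $c^p\,p\,A_{p-1}\le C A_p$''. The latter requires $A_p/(pA_{p-1})$ to grow at least exponentially with arbitrary base, which $(M.3)'$ does not give: $A_p=(p!)^2$ satisfies $(M.1)$, $(M.2)$, $(M.3)'$, $(M.4)$, yet $A_p/(pA_{p-1})=p$, so the claim fails already for $c=2$. (Incidentally, the inequality you actually need after Lemma \ref{zkk}, namely $|\gamma|A_{|\gamma|-1}\le A_{|\gamma|}$, is free of charge from $(M.4)$; the role of $(M.3)'$ in this proof is a different one, see below.) Second, the ``standard device'' of a small auxiliary constant with summable losses does not apply here: the per-step loss in your induction is the \emph{multiplicative} constant $C_{h_1}\ge 1$ coming from \eqref{dd2}, which is applied once at every step to some $D^{\delta}a$. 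After $N$ steps you have $C_{h_1}^N h^N=(C_{h_1}h)^N$. In the Roumieu case this is harmless (you only need \emph{some} $h$, so replace $h$ by $C_{h_1}h$), but in the Beurling case, which demands the bound for \emph{every} $h$ with a constant independent of the order, it is fatal: you would need to run the induction with auxiliary parameter $h_{\mathrm{aux}}\le h/C_{h_1}$ while also $h_{\mathrm{aux}}\geq 2^{4d+2}h_1$, i.e. $C_{h_1}h_1\le 2^{-4d-2}h$; since $C_{h_1}$ is the constant attached to $h_1$ by \eqref{dd2} and may blow up as $h_1\to 0$, this choice is circular.

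What the paper proves instead is the refined statement $\left|D^{\alpha}_w p_0(w)\right|\le C_{h_1}^{\min\{s,|\alpha|\}}h^{|\alpha|}|p_0(w)|A_{\alpha}\langle w\rangle^{-\rho|\alpha|}$, where $s$ is chosen --- and this is precisely where $A_p/(pA_{p-1})\to\infty$, i.e. $(M.3)'$, enters --- so that $C_{h_1}s'A_{s'-1}\le A_{s'}$ for all $s'\ge s$. For orders $\le s$ the constant is simply allowed to grow by one factor $C_{h_1}$ per step. For orders $>s$ the constant must stop growing, and this is achieved by two ingredients absent from your proposal: (i) in the interior terms $0\neq\delta\neq\gamma$, the extra factor $C_{h_1}$ produced by \eqref{dd2} is absorbed via $C_{h_1}|\alpha|A_{|\alpha|-1}\le A_{|\alpha|}$, valid because the order now exceeds $s$; (ii) in the top term $\delta=\gamma$, where the inductive hypothesis gives nothing and \eqref{dd2} must be used at full order, the constant is eliminated beforehand by fixing $t$ with $C_{h_1}\le 2^t$ and using $C_{h_1}h_1^{|\alpha|}\le(2h_1)^{|\alpha|}$ for $|\alpha|\ge t$, the factor $2$ being fed into the margin $2^{4d+2}h_1\le h$. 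Without (i) and (ii), or some substitute for them, your induction does not close in the Beurling case.
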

\begin{proof}
We observe preliminary that $(M.1)$ and $(M.2)$ on $A_p$ imply that (\ref{dd2}) is equivalent to saying that there exists $B>0$ such that for every $h>0$ there exists $C>0$ (resp. there exist $h,C>0$) such that
\beq\label{dd3}
\left|D^{\alpha}_{\xi}D^{\beta}_x a(x,\xi)\right|\leq C\frac{h^{|\alpha+\beta|}|a(x,\xi)|A_{\alpha+\beta}}{\langle(x,\xi)\rangle^{\rho(|\alpha+\beta|)}},\,\, \alpha,\beta\in\NN^d,\, (x,\xi)\in Q^c_B.
\eeq
Then, to simplify the notation, we set $w=(x,\xi)$. First we will consider the $(M_p)$ case. Let $h>0$ be arbitrary but fixed and take $h_1>0$ such that $2^{4d+2}h_1\leq h$. Then there exists $C_{h_1}\geq 1$ such that
\beq\label{pnn}
\left|D^{\alpha}_w a(w)\right|\leq C_{h_1}\frac{h_1^{|\alpha|}|a(w)|A_{\alpha}}{\langle w\rangle^{\rho|\alpha|}},\,\, \alpha\in\NN^{2d},\, w\in Q^c_B.
\eeq
Now, there exists $t\in\ZZ_+$ such that $C_{h_1}\leq 2^t$. Then, for $|\alpha|\geq t$,
\beq\label{prr}
\left|D^{\alpha}_w a(w)\right|\leq \frac{(2h_1)^{|\alpha|}|a(w)|A_{\alpha}}{\langle w\rangle^{\rho|\alpha|}},\,\, w\in Q^c_B.
\eeq
Choose $s\in\NN$, $s>t+1$, such that
\beq\label{nrr}
C_{h_1}s'A_{s'-1}\leq A_{s'},\, \mbox{ for all } s'\geq s.
\eeq
We will prove that
\beq\label{zdd}
\left|D^{\alpha}_w p_0(w)\right|\leq C_{h_1}^{\min\{s,|\alpha|\}}\frac{h^{|\alpha|}|p_0(w)|A_{\alpha}}{\langle w\rangle^{\rho|\alpha|}},\,\, \alpha\in\NN^{2d},\, w\in Q^c_B,
\eeq
which will complete the proof in the $(M_p)$ case.\\
\indent For $|\alpha|=0$, (\ref{zdd}) is obviously true. Suppose that it is true for $|\alpha|\leq k$, for some $0\leq k\leq s-1$. We will prove that it holds for $|\alpha|=k+1$. If we differentiate the equality $a(w)p_0(w)=1$ on $Q^c_B$, we have
\beqs
|a(w)||D^{\alpha}_w p_0(w)|&\leq& \sum_{\substack{\beta\leq \alpha\\ \beta\neq 0}} {\alpha\choose\beta}|D^{\alpha-\beta}_w p_0(w)| \cdot |D^{\beta}_w a(w)|.
\eeqs
We can use the inductive hypothesis for the terms $|D^{\alpha-\beta}_w p_0(w)|$, Lemma \ref{zkk} and the fact that $qA_{q-1}\leq A_q$, $\forall q\in\ZZ_+$, (which follows from $(M.4)$) to obtain
\beqs
\left|D^{\alpha}_w p_0(w)\right|&\leq& \frac{C_{h_1}^{k+1}|p_0(w)|}{\langle w\rangle^{\rho|\alpha|}}\sum_{\substack{\beta\leq \alpha\\ \beta\neq 0}} {\alpha\choose\beta}h^{|\alpha|-|\beta|} h_1^{|\beta|} A_{\alpha-\beta}A_{\beta}\\
&\leq&\frac{C_{h_1}^{k+1}|p_0(w)|h^{|\alpha|}A_{\alpha}}{\langle w\rangle^{\rho|\alpha|}}\sum_{\substack{\beta\leq \alpha\\ \beta\neq 0}}\left(\frac{h_1}{h}\right)^{|\beta|}\\
&\leq&\frac{C_{h_1}^{k+1}|p_0(w)|h^{|\alpha|}A_{\alpha}}{\langle w\rangle^{\rho|\alpha|}}\sum_{r=1}^{\infty}\left(\frac{h_1}{h}\right)^r\sum_{|\beta|=r}1.
\eeqs
Since
\beqs
\sum_{r=1}^{\infty}\left(\frac{h_1}{h}\right)^r\sum_{|\beta|=r}1\leq \sum_{r=1}^{\infty}{{r+2d-1}\choose{2d-1}}\left(\frac{h_1}{h}\right)^r\leq \sum_{r=1}^{\infty}\left(\frac{2^{4d}h_1}{h}\right)^r\leq 1,
\eeqs
(\ref{zdd}) is true for $0\leq |\alpha|\leq s$. To continue the induction, assume that it is true for $|\alpha|\leq k$, with $k\geq s$. To prove it for $|\alpha|=k+1$, differentiate the equality $a(w)p_0(w)=1$ for $w \in Q^c_B$. We obtain
\beqs
|a(w)|\left|D^{\alpha}_w p_0(w)\right|\leq \sum_{\substack{\beta\leq \alpha\\ \beta\neq 0,\, \beta\neq \alpha}} {\alpha\choose\beta}\left|D^{\alpha-\beta}_w p_0(w)\right| \left|D^{\beta}_w a(w)\right|+\left|p_0(w)\right|\left|D^{\alpha}_w a(w)\right|.
\eeqs
We can use the inductive hypothesis for the terms $\left|D^{\alpha-\beta}_w p_0(w)\right|$, Lemma \ref{zkk} and (\ref{nrr}) to obtain
\beqs
\left|D^{\alpha}_w p_0(w)\right|&\leq& \frac{C_{h_1}^{s}|p_0(w)|}{\langle w\rangle^{\rho|\alpha|}} \left((2h_1)^{|\alpha|}A_{\alpha}+\sum_{\substack{\beta\leq \alpha\\ \beta\neq 0,\, \beta\neq \alpha}}{\alpha\choose\beta} C_{h_1}h^{|\alpha|-|\beta|} h_1^{|\beta|} A_{\alpha-\beta}A_{\beta}\right)\\
&\leq&\frac{C_{h_1}^{s}|p_0(w)|}{\langle w\rangle^{\rho|\alpha|}} \left((2h_1)^{|\alpha|}A_{\alpha}+\sum_{\substack{\beta\leq \alpha\\ \beta\neq 0,\, \beta\neq \alpha}} h^{|\alpha|-|\beta|} h_1^{|\beta|} C_{h_1}|\alpha|A_{|\alpha|-1}\right)\\
&\leq&\frac{C_{h_1}^{s}|p_0(w)|}{\langle w\rangle^{\rho|\alpha|}}\left((2h_1)^{|\alpha|}A_{\alpha}+A_{\alpha}h^{|\alpha|}\sum_{\substack{\beta\leq\alpha\\ \beta\neq 0,\, \beta\neq \alpha}}\left(\frac{h_1}{h}\right)^{|\beta|}\right)\\
&\leq&\frac{C_{h_1}^{s}h^{|\alpha|}|p_0(w)|A_{\alpha}}{\langle w\rangle^{\rho|\alpha|}} \sum_{r=1}^{\infty}\left(\frac{2h_1}{h}\right)^{r}\sum_{|\beta|=r}1\\
&=&\frac{C_{h_1}^{s}h^{|\alpha|}|p_0(w)|A_{\alpha}}{\langle w\rangle^{\rho|\alpha|}} \sum_{r=1}^{\infty}{{r+2d-1}\choose{2d-1}}\left(\frac{2h_1}{h}\right)^{r}.
\eeqs
Finally, we observe that
\beqs
\sum_{r=1}^{\infty}{{r+2d-1}\choose{2d-1}}\left(\frac{2h_1}{h}\right)^{r}\leq \sum_{r=1}^{\infty}\left(\frac{2^{4d+1}h_1}{h}\right)^{r}\leq 1.
\eeqs
This completes the induction.\\
\indent In the $\{M_p\}$ case, there exist $h_1, C_{h_1}>0$ such that (\ref{pnn}) holds. Take $h$ such that $2^{4d+2}h_1\leq h$. Choose $t$ and $s$ as in (\ref{prr}) and (\ref{nrr}). Then we can prove (\ref{zdd}) in the same way as for the $(M_p)$ case.
\end{proof}

\begin{remark}
We observe that to prove Lemma \ref{lpc} we can replace the assumption $(M.4)$ on $A_p$ by a weaker asssumption. Namely we can assume that there exists $K>0$ such that $\ds \left(\frac{M_q}{q!}\right)^{1/q}\leq K \left(\frac{M_p}{p!}\right)^{1/p}$, for all $1\leq q\leq p$. In fact, the latter condition is the same adopted to prove that $1/f\in \EE^*(\RR)$ when $f\in\EE^*(\RR)$ and $\inf|f(x)|\neq 0$ (cf. \cite{Bruna} for the Beurling case and \cite{Rudin} for the Roumieu case). The proof in \cite{Bruna}, \cite{Rudin} relies on careful considerations of the coefficients in the Fa\`a di Bruno formula applied to the composition of the mapping $t\mapsto 1/t$ with $a(x,\xi)$. On the contrary $(M.4)$ is needed to prove the next Lemma \ref{rnc}.
\end{remark}

\begin{lemma}\label{rnc}
Let $a\in\Gamma^{*,\infty}_{A_p,\rho}\left(\RR^{2d}\right)$ be $\Gamma^{*,\infty}_{A_p, \rho}$-hypoelliptic. Define $p_0(x,\xi)=a(x,\xi)^{-1}$ and inductively $$\ds p_j(x,\xi)=-p_0(x,\xi)\sum_{0<|\nu|\leq j} \frac{1}{\nu!}\partial^{\nu}_{\xi}p_{j-|\nu|}(x,\xi)D^{\nu}_x a(x,\xi), j\in\ZZ_+.$$ Then, the functions $p_j$ satisfy the following conditions:\\
\indent there exist $B>0$ such that for every $h>0$ there exists $C>0$ (resp. there exist $h,C>0$) such that
\beq\label{nl2}
\left|D^{\alpha}_{\xi}D^{\beta}_x p_j(x,\xi)\right|&\leq& C\frac{h^{|\alpha|+|\beta|+2j}A_{|\alpha|+|\beta|+2j}|p_0(x,\xi)|} {\langle(x,\xi)\rangle^{\rho(|\alpha|+|\beta|+2j)}},
\eeq
for all $\alpha,\beta\in\NN^d$, $(x,\xi)\in Q^c_B$, $j\in\ZZ_+$;\\
\indent there exist $m,B>0$ such that for every $h>0$ there exists $C>0$ (resp. there exist $h,B>0$ such that for every $m>0$ there exists $C>0$) such that
\beq\label{n12333}
\left|D^{\alpha}_{\xi}D^{\beta}_x p_j(x,\xi)\right|&\leq& C\frac{h^{|\alpha|+|\beta|+2j}A_{|\alpha|+|\beta|+2j}e^{M(m|x|)}e^{M(m|\xi|)}} {\langle(x,\xi)\rangle^{\rho(|\alpha|+|\beta|+2j)}},
\eeq
for all $\alpha,\beta\in\NN^d$, $(x,\xi)\in Q^c_B$, $j\in\ZZ_+$.
\end{lemma}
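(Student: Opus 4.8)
I would split the statement into its genuinely hard part, the bound \eqref{nl2}, and an essentially free consequence, the weighted bound \eqref{n12333}. Indeed, on $Q^c_B$ the lower bound \eqref{dd1} gives $|p_0(x,\xi)|=|a(x,\xi)|^{-1}\leq c^{-1}e^{M(m|x|)+M(m|\xi|)}$, and substituting this into the right-hand side of \eqref{nl2} yields \eqref{n12333} at once, with the quantifiers matching the two cases: in the $(M_p)$ case the pair $(c,m)$ from \eqref{dd1} is fixed while \eqref{nl2} is available for every $h$, and in the $\{M_p\}$ case the parameter $h$ from \eqref{nl2} is fixed while \eqref{dd1} holds for every $m$. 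So the whole weight of the lemma lies in \eqref{nl2}, and I would prove only that.

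I would establish \eqref{nl2} by induction on $j$, writing $w=(x,\xi)$ and regarding $(\alpha,\beta)$ as a single multi-index $\gamma\in\NN^{2d}$. The base case $j=0$ is exactly Lemma~\ref{lpc}, since $A_{\alpha+\beta}=A_{|\alpha|+|\beta|}$. For the inductive step I would differentiate the defining relation for $p_j$ by the Leibniz rule; every resulting term is a product of a derivative $D^{\gamma_1}p_0$, a derivative $D^{\gamma_2}\partial_\xi^\nu p_{j-|\nu|}$, and a derivative $D^{\gamma_3}D_x^\nu a$, with $\gamma_1+\gamma_2+\gamma_3=\gamma$ and $0<|\nu|\leq j$. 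These three factors I would bound by Lemma~\ref{lpc}, by the inductive hypothesis \eqref{nl2} applied to $p_{j-|\nu|}$ (at derivative order $|\gamma_2|+|\nu|$), and by the hypoellipticity estimate \eqref{dd3} respectively. The decisive simplification is that the three amplitudes multiply as $|p_0|\cdot|p_0|\cdot|a|=|p_0|$, since $|a|=|p_0|^{-1}$, so a single factor $|p_0|$ survives; and a direct count shows that in each term the power of $\langle w\rangle^{-\rho}$, the power of $h$, and the index of the surviving $A$-factor all equal $|\gamma_1|+(|\gamma_2|+2j-|\nu|)+(|\gamma_3|+|\nu|)=|\alpha|+|\beta|+2j$, precisely the weight demanded by \eqref{nl2}.

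What remains is to recombine the $A$-factors and to sum the Leibniz coefficients and the factors $1/\nu!$. The bare product of the three $A$'s is controlled for free by log-convexity: condition $(M.1)$ (itself a consequence of $(M.4)$) together with $A_0=1$ gives $A_pA_q\leq A_{p+q}$, so the three factors collapse to a single $A_{|\alpha|+|\beta|+2j}$ with no constant and, in particular, no exponential loss of the type that a naive use of $(M.2)$ would incur. The delicate point is the \emph{binomial coefficients} $\binom{\gamma}{\gamma_1,\gamma_2,\gamma_3}$, which must not be allowed to produce a factor growing like $(1+\varepsilon)^{|\gamma|}$; here I would invoke the sharp inequality of Lemma~\ref{zkk} (applied twice for the three-fold split) together with $qA_{q-1}\leq A_q$, which absorb the binomials directly into the $A$-factors. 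This is exactly the step for which $(M.4)$ on $A_p$ is indispensable, as flagged in the remark following Lemma~\ref{lpc}. After this absorption, the remaining sums over the splittings and over $\nu$ reduce to geometric series controlled by an auxiliary parameter $h_1$ with $h_1/h$ small, in the spirit of the series $\sum_r(2^{cd}h_1/h)^r\leq1$ used in Lemma~\ref{lpc}, while $\sum_{0<|\nu|\le j}1/\nu!\leq e^d$ handles the summation in $\nu$.

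The step I expect to be the main obstacle is making the constant in \eqref{nl2} \emph{uniform in} $j$. The source of the difficulty is that the recursion couples $p_j$ to all of $p_0,\dots,p_{j-1}$ through the sum over $0<|\nu|\leq j$, so a careless estimate multiplies the base constants from Lemma~\ref{lpc} and \eqref{dd3} at every level and produces a factor growing geometrically in $j$. The plan for overcoming this is to choose the auxiliary parameter $h_1$ small enough that the combinatorial output of Lemma~\ref{zkk} and the geometric sums are dominated by a single summability constant, so that the induction closes without compounding; any residual growth of the form $R^{j}$ can then be reabsorbed into the $h^{2j}$ already present, at the cost of passing to an effective parameter $\sqrt{R}\,h$. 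Finally the Beurling/Roumieu bookkeeping is routine: in the $\{M_p\}$ case $h_1$ and the corresponding constants are furnished at the outset by \eqref{dd3} and Lemma~\ref{lpc}, whereas in the $(M_p)$ case $h_1$ (and hence the constant) is selected after the target $h$ is prescribed, exactly as in the proof of Lemma~\ref{lpc}.
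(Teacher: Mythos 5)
Your overall architecture coincides with the paper's: the same reduction of \eqref{n12333} to \eqref{nl2} via \eqref{dd1}, the same induction on $j$ through the Leibniz expansion of the recursion, the same power count showing that the $h$-powers, the $\langle w\rangle^{-\rho}$-powers and the $A$-index all add up to $|\alpha|+|\beta|+2j$, and the same underlying reason why the multinomial coefficients cost nothing, namely that $(M.4)$ with $A_0=A_1=1$ makes $A_p/p!$ log-convex. One bookkeeping imprecision: you cannot first ``collapse the three $A$'s by $(M.1)$'' and afterwards ``absorb the binomials into the $A$-factors''--- these are one and the same inequality, used once (if the $A$'s are already collapsed, nothing is left to absorb the binomials, and the multinomial theorem then produces exactly the fatal $(1+2h_1/h)^{|\alpha|}$ you want to avoid). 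Moreover Lemma~\ref{zkk} literally concerns matching multi-index splits $\binom{\alpha}{\beta}A_{\alpha-\beta}A_{\beta}$, whereas in the inductive step the $A$-indices are shifted: the three factors carry $A_{|\beta|}$, $A_{|\gamma|+2j-|\nu|}$ and $A_{|\delta|+|\nu|}$. So what you really need is the scalar submultiplicativity of $A_p/p!$ together with factorial padding of the multinomial coefficient up to $(|\alpha|+2j)!/\bigl(|\beta|!\,(|\gamma|+2j-|\nu|)!\,(|\delta|+|\nu|)!\bigr)$; the paper gets exactly this from Lemma~\ref{tec} (not Lemma~\ref{zkk}). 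This part of your plan is reparable.

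The genuine gap is in your treatment of the uniformity in $j$, precisely the point you flag as the main obstacle, and it breaks the Beurling case. The compounding factor per inductive level is $C_{h_1}^{2}$, where $C_{h_1}$ is the constant furnished by Lemma~\ref{lpc} and \eqref{dd3} at parameter $h_1$; choosing $h_1$ smaller tames the geometric sums but does \emph{not} tame $C_{h_1}$ (if anything it grows), so after closing the induction you are left with $C_{h_1}^{2j+1}h^{|\alpha|+|\beta|+2j}$. Your rescaling $h\mapsto\sqrt{R}\,h$ with $R=C_{h_1}^{2}$ is legitimate in the Roumieu case, where one only needs \emph{some} $h$ and enlarging it is free. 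In the $(M_p)$ case, however, the bound must hold for \emph{every} prescribed $h$ with a constant independent of $j$: to absorb $R^{j}$ you must run the argument at the smaller parameter $h/\sqrt{R}$, which forces a smaller $h_1$, hence a new and generally larger constant $C_{h_1}$, hence a new $R$ --- and since the symbol classes give no control whatsoever on how $C_{h}$ blows up as $h\downarrow 0$ (it may grow like $e^{1/h}$), this recursion need not have a fixed point. The missing idea, which is how the paper resolves it, is the stabilization device based on $(M.3)'$: since $A_p/(pA_{p-1})\to\infty$, one fixes a threshold $s$ with $C_{h_1}^{2}s'A_{s'-1}\leq A_{s'}$ for all $s'\geq s$ (as in \eqref{nrr}, \eqref{nrrr}), runs the induction in two phases, and for $j>s$ the per-step factor $C_{h_1}^{2}$ is absorbed into the ratio $A_{k}/(kA_{k-1})$ at the index $k=|\gamma|+2j-|\nu|+1$ (which is $\geq s$ and still $\leq|\alpha|+2j$, so Lemma~\ref{tec} applies afterwards). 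This caps the constant at $C_{h_1}^{2\min\{s,j\}+1}\leq C_{h_1}^{2s+1}$, uniformly in $j$, as in \eqref{zdddd}, for both the Beurling and the Roumieu case. Without this (or an equivalent) device your induction does not yield the $(M_p)$ half of the lemma.
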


\begin{proof} First, observe that it is enough to prove (\ref{nl2}) since (\ref{n12333}) follows from (\ref{nl2}) by (\ref{dd1}) (possibly with different constants). As before, we put $w=(x,\xi)$. We will consider first the $(M_p)$ case. Let $h>0$ be fixed. Choose $h_1>0$ so small such that $2^{9d+1}h_1\leq h$ and $e^{4^d dh_1/h}-1\leq 1/2$. Then by assumption and Lemma \ref{lpc}, there exists $C_{h_1}\geq 1$ such that
\beq
\left|D^{\alpha}_w a(w)\right|&\leq& C_{h_1}\frac{h_1^{|\alpha|}|a(w)|A_{\alpha}}{\langle w\rangle^{\rho|\alpha|}},\,\, \alpha\in\NN^{2d},\, w\in Q^c_B,\label{pr1}\\
\left|D^{\alpha}_w p_0(w)\right|&\leq& C_{h_1}\frac{h_1^{|\alpha|}|p_0(w)|A_{\alpha}}{\langle w\rangle^{\rho|\alpha|}},\,\, \alpha\in\NN^{2d},\, w\in Q^c_B,\label{pr2}
\eeq
Take $s\in\ZZ_+$, such that
\beq\label{nrrr}
C_{h_1}^2s'A_{s'-1}\leq A_{s'},\, \mbox{ for all } s'\geq s.
\eeq
We will prove that, for $j \geq 1$,
\beq\label{zdddd}
\left|D^{\alpha}_w p_j(w)\right|\leq C_{h_1}^{2\min\{s,j\}+1}\frac{h^{|\alpha|+2j}A_{|\alpha|+2j}|p_0(w)|} {\langle w\rangle^{\rho(|\alpha|+2j)}},
\eeq
for all $\alpha\in\NN^{2d}$, $w\in Q^c_B$, $j\in\ZZ_+$, which will prove the lemma in the $(M_p)$ case. We can argue by induction on $j$. For $j=1$, we have
\beqs
\left|D^{\alpha}_w p_1(w)\right|&\leq&\sum_{\beta+\gamma+\delta=\alpha}\sum_{|\nu|=1} \frac{\alpha!}{\beta!\gamma!\delta!}\left|D^{\beta}_w p_0(w)\right|\left|D^{\gamma}_wD^{\nu}_{\xi} p_{0}(w)\right|\left|D^{\delta}_w D^{\nu}_x a(w)\right|\\
&\leq&\frac{C_{h_1}^{3}|p_0(w)|}{\langle w\rangle^{\rho(|\alpha|+2)}} \sum_{\beta+\gamma+\delta=\alpha}\frac{d\cdot \alpha!}{\beta!\gamma!\delta!} h_1^{|\beta|}A_{|\beta|}h^{|\gamma|+1}A_{|\gamma|+1} h_1^{|\delta|+1}A_{|\delta|+1}.
\eeqs
For $|\gamma|\geq 1$, by using Lemma \ref{tec}, we obtain
\beqs
A_{|\gamma|+1}\leq (|\gamma|+1)!\left(\frac{A_{|\alpha|+2}}{(|\alpha|+2)!}\right)^{\frac{|\gamma|}{|\alpha|+1}}.
\eeqs
For $|\gamma|=0$ this trivially holds. Also, if $|\beta|\geq 2$,
\beqs
A_{\beta}\leq |\beta|!\left(\frac{A_{|\alpha|+2}}{(|\alpha|+2)!}\right)^{\frac{|\beta|-1}{|\alpha|+1}}\leq |\beta|!\left(\frac{A_{|\alpha|+2}}{(|\alpha|+2)!}\right)^{\frac{|\beta|}{|\alpha|+1}}
\eeqs
and this obviously holds if $|\beta|=1$ or $|\beta|=0$ (note that $(M.4)$ implies that $A_p\geq p!$ for all $p\in\NN$). Moreover for $|\delta|\geq 1$, by Lemma \ref{tec}, we have
\beqs
A_{|\delta|+1}\leq (|\delta|+1)!\left(\frac{A_{|\alpha|+2}}{(|\alpha|+2)!}\right)^{\frac{|\delta|}{|\alpha|+1}}.
\eeqs
If $|\delta|=0$ this inequality obviously holds. Insert these inequalities in the estimate for $\left|D^{\alpha}_w p_1(w)\right|$ to obtain
\beqs
\left|D^{\alpha}_w p_1(w)\right|&\leq&\frac{C_{h_1}^{3}h^{|\alpha|+2}A_{|\alpha|+2}|p_0(w)|}{\langle w\rangle^{\rho(|\alpha|+2)}}\sum_{\beta+\gamma+\delta=\alpha} \frac{d\cdot\alpha!}{\beta!\gamma!\delta!} \left(\frac{h_1}{h}\right)^{|\beta|+|\delta|+1}\\
&{}&\cdot\frac{(|\gamma|+1)! |\beta|!(|\delta|+1)!}{(|\alpha|+2)!}.
\eeqs
Observe that
\beqs
\frac{\alpha!}{\beta!\gamma!\delta!}&=&{\alpha\choose{\beta+\gamma}}{{\beta+\gamma}\choose\beta} \leq{|\alpha|\choose{|\beta+\gamma|}}{{|\beta+\gamma|}\choose|\beta|}\\
&=&\frac{|\alpha|!}{|\beta|!|\gamma|!|\delta|!}\leq \frac{(|\alpha|+1)!}{|\beta|!(|\gamma|+1)!|\delta|!}\leq\frac{(|\alpha|+2)!}{|\beta|!(|\gamma|+1)!(|\delta|+1)!}.
\eeqs
We obtain
\beqs
\left|D^{\alpha}_w p_1(w)\right|\leq\frac{C_{h_1}^{3}h^{|\alpha|+2}A_{|\alpha|+2}|p_0(w)|}{\langle w\rangle^{\rho(|\alpha|+2)}}\sum_{\beta+\gamma+\delta=\alpha} \left(\frac{2^dh_1}{h}\right)^{|\beta|+|\delta|+1}.
\eeqs
Note that
\beqs
\sum_{\beta+\gamma+\delta=\alpha} \left(\frac{2^dh_1}{h}\right)^{|\beta|+|\delta|+1}&\leq& \sum_{l=0}^{\infty}\sum_{|\beta|+|\delta|=l}\left(\frac{2^dh_1}{h}\right)^{l+1}\\
&\leq& \sum_{l=0}^{\infty}{{l+4d-1}\choose{4d-1}}\left(\frac{2^dh_1}{h}\right)^{l+1}\\
&\leq& \sum_{l=0}^{\infty}\left(\frac{2^{9d}h_1}{h}\right)^{l+1} \leq 1,
\eeqs
which completes the proof for $j=1$. Suppose that it holds for all $j\leq k$, $k\leq s-1$, $k\in\ZZ_+$. We will prove it for $j=k+1$.\\
\begin{multline*}
|D^{\alpha}_w p_j(w)| \leq \sum_{\beta+\gamma+\delta=\alpha}\sum_{0<|\nu|\leq j} \frac{\alpha!}{\beta!\gamma!\delta!}\cdot\frac{1}{\nu!}|D^{\beta}_w p_0(w)|\cdot|D^{\gamma}_wD^{\nu}_{\xi} p_{j-|\nu|}(w)| \cdot|D^{\delta}_w D^{\nu}_x a(w)|\\
\leq \frac{C_{h_1}^{2j+1}|p_0(w)|}{\langle w\rangle^{\rho(|\alpha|+2j)}}\sum_{\beta+\gamma+\delta=\alpha}\sum_{0<|\nu|\leq j} \frac{\alpha!}{\beta!\gamma!\delta!\nu!}
\cdot h_1^{|\beta|}A_{|\beta|}h^{|\gamma|+2j-|\nu|}A_{|\gamma|+2j-|\nu|} h_1^{|\delta|+|\nu|}A_{|\delta|+|\nu|},
\end{multline*}
where we used the inductive hypothesis for the derivatives of the terms $p_{j-|\nu|}(w)$. By using Lemma \ref{tec}, we obtain (note that $2j-|\nu|\geq 2$)
\beqs
A_{|\gamma|+2j-|\nu|}&\leq& (|\gamma|+2j-|\nu|)!\left(\frac{A_{|\alpha|+2j}}{(|\alpha|+2j)!}\right)^{\frac{|\gamma|+2j-|\nu|-1}{|\alpha|+2j-1}}\\
&\leq& (|\gamma|+2j-|\nu|)!\left(\frac{A_{|\alpha|+2j}}{(|\alpha|+2j)!}\right)^{\frac{|\gamma|+2j-|\nu|}{|\alpha|+2j-1}},
\eeqs
where the last inequality follows from $A_p\geq p!$, $p\in \NN$, which in turn follows from $(M.4)$. Also, if $|\beta|\geq 2$,
\beqs
A_{\beta}\leq |\beta|!\left(\frac{A_{|\alpha|+2j}}{(|\alpha|+2j)!}\right)^{\frac{|\beta|-1}{|\alpha|+2j-1}}\leq |\beta|!\left(\frac{A_{|\alpha|+2j}}{(|\alpha|+2j)!}\right)^{\frac{|\beta|}{|\alpha|+2j-1}}
\eeqs
and this obviously holds if $|\beta|=1$ or $|\beta|=0$. Moreover for $|\delta|\geq 1$, by Lemma \ref{tec} (because $|\nu|\geq 1$), we have
\beqs
A_{|\delta|+|\nu|}\leq (|\delta|+|\nu|)!\left(\frac{A_{|\alpha|+2j}}{(|\alpha|+2j)!}\right)^{\frac{|\delta|+|\nu|-1}{|\alpha|+2j-1}}.
\eeqs
If $|\delta|=0$ and $|\nu|\geq 2$ Lemma \ref{tec} implies the same inequality and if $|\delta|=0$ and $|\nu|=1$ this inequality obviously holds. If we insert these inequalities in the estimate for $\left|D^{\alpha}_w p_j(w)\right|$, we obtain\\
\\
$\left|D^{\alpha}_w p_j(w)\right|$
\beqs
&\leq&\frac{C_{h_1}^{2j+1}|p_0(w)|}{\langle w\rangle^{\rho(|\alpha|+2j)}} \sum_{\beta+\gamma+\delta=\alpha}\sum_{0<|\nu|\leq j} \frac{\alpha!}{\beta!\gamma!\delta!\nu!}h_1^{|\beta|}h^{|\gamma|+2j-|\nu|} h_1^{|\delta|+|\nu|}\\
&{}&\cdot(|\gamma|+2j-|\nu|)!\left(\frac{A_{|\alpha|+2j}}{(|\alpha|+2j)!}\right)^{\frac{|\gamma|+2j-|\nu|}{|\alpha|+2j-1}} |\beta|!\left(\frac{A_{|\alpha|+2j}}{(|\alpha|+2j)!}\right)^{\frac{|\beta|}{|\alpha|+2j-1}}\\
&{}&(|\delta|+|\nu|)!\left(\frac{A_{|\alpha|+2j}}{(|\alpha|+2j)!}\right)^{\frac{|\delta|+|\nu|-1}{|\alpha|+2j-1}}\\
&=&\frac{C_{h_1}^{2j+1}h^{|\alpha|+2j}A_{|\alpha|+2j}|p_0(w)|}{\langle w\rangle^{\rho(|\alpha|+2j)}} \sum_{\beta+\gamma+\delta=\alpha}\sum_{0<|\nu|\leq j} \frac{\alpha!}{\beta!\gamma!\delta!\nu!} \left(\frac{h_1}{h}\right)^{|\beta|+|\delta|+|\nu|}\\
&{}&\cdot\frac{(|\gamma|+2j-|\nu|)! |\beta|!(|\delta|+|\nu|)!}{(|\alpha|+2j)!}.
\eeqs
Similarly as above, we have
\beqs
\frac{\alpha!}{\beta!\gamma!\delta!}&\leq&\frac{|\alpha|!}{|\beta|!|\gamma|!|\delta|!}\leq \frac{(|\alpha|+2j-|\nu|)!}{|\beta|!(|\gamma|+2j-|\nu|)!|\delta|!}\\
&\leq&\frac{(|\alpha|+2j)!}{|\beta|!(|\gamma|+2j-|\nu|)!(|\delta|+|\nu|)!}.
\eeqs
We obtain
$$|D^{\alpha}_w p_j(w)| \leq
\frac{C_{h_1}^{2j+1}h^{|\alpha|+2j}A_{|\alpha|+2j}|p_0(w)|}{\langle w\rangle^{\rho(|\alpha|+2j)}} \sum_{\beta+\gamma+\delta=\alpha}\sum_{r=1}^{\infty}\sum_{|\nu|=r} \frac{1}{\nu!}\left(\frac{h_1}{h}\right)^{|\beta|+|\delta|+r}.
$$
We have the estimate\\
\\
$\ds \sum_{\beta+\gamma+\delta=\alpha}\sum_{r=1}^{\infty}\sum_{|\nu|=r} \frac{1}{\nu!}\left(\frac{h_1}{h}\right)^{|\beta|+|\delta|+r}$
\beqs
&\leq& \sum_{\beta+\gamma+\delta=\alpha}\sum_{r=1}^{\infty}{{r+d-1}\choose{d-1}} \frac{d^r}{r!}\left(\frac{h_1}{h}\right)^{|\beta|+|\delta|+r}\\
&\leq&\sum_{\beta+\gamma+\delta=\alpha} \left(\frac{h_1}{h}\right)^{|\beta|+|\delta|} \sum_{r=1}^{\infty} \frac{1}{r!}\left(\frac{2^{2d}dh_1}{h}\right)^{r}\\
&=&\left(e^{4^d dh_1/h}-1\right)\sum_{\beta+\gamma+\delta=\alpha} \left(\frac{h_1}{h}\right)^{|\beta|+|\delta|}=\left(e^{4^d dh_1/h}-1\right)\sum_{\beta+\delta\leq\alpha} \left(\frac{h_1}{h}\right)^{|\beta|+|\delta|}\\
&\leq&\left(e^{4^d dh_1/h}-1\right)\sum_{l=0}^{\infty}\left(\frac{h_1}{h}\right)^{l}\sum_{|\beta|+|\delta|=l} 1\\
&=&\left(e^{4^d dh_1/h}-1\right)\sum_{l=0}^{\infty}\left(\frac{h_1}{h}\right)^{l}{{l+4d-1}\choose{4d-1}}\\
&\leq& \left(e^{4^d dh_1/h}-1\right)\sum_{l=0}^{\infty}\left(\frac{2^{8d}h_1}{h}\right)^{l}\leq 1.
\eeqs
Hence, we proved (\ref{zdddd}) for $1\leq j\leq s$. Suppose that it holds for all $j\leq k$, $k\geq s$. For $j=k+1$, similarly as above, we obtain
\beqs
\left|D^{\alpha}_w p_j(w)\right|&\leq& \frac{C_{h_1}^{2s+1}|p_0(w)|}{\langle w\rangle^{\rho(|\alpha|+2j)}}\sum_{\beta+\gamma+\delta=\alpha}\sum_{0<|\nu|\leq j} \frac{\alpha!}{\beta!\gamma!\delta!\nu!}\\
&{}&\cdot C_{h_1}^2h_1^{|\beta|}A_{|\beta|}h^{|\gamma|+2j-|\nu|}A_{|\gamma|+2j-|\nu|} h_1^{|\delta|+|\nu|}A_{|\delta|+|\nu|}.
\eeqs
Note that $|\gamma|+2j-|\nu|\geq s$, so, by (\ref{nrrr}), we have
\beqs
C_{h_1}^2A_{|\gamma|+2j-|\nu|}\leq A_{|\gamma|+2j-|\nu|+1}/(|\gamma|+2j-|\nu|+1).
\eeqs
Also $|\gamma|+2j-|\nu|+1\leq |\alpha|+2j$, hence Lemma \ref{tec} implies
\beqs
C_{h_1}^2A_{|\gamma|+2j-|\nu|}\leq \frac{A_{|\gamma|+2j-|\nu|+1}}{|\gamma|+2j-|\nu|+1}\leq (|\gamma|+2j-|\nu|)!\left(\frac{A_{|\alpha|+2j}}{(|\alpha|+2j)!}\right)^{\frac{|\gamma|+2j-|\nu|}{|\alpha|+2j-1}}.
\eeqs
In the same manner as above we obtain
\beqs
A_{\beta}\leq |\beta|!\left(\frac{A_{|\alpha|+2j}}{(|\alpha|+2j)!}\right)^{\frac{|\beta|}{|\alpha|+2j-1}} \mbox{ and } A_{|\delta|+|\nu|}\leq (|\delta|+|\nu|)!\left(\frac{A_{|\alpha|+2j}}{(|\alpha|+2j)!}\right)^{\frac{|\delta|+|\nu|-1}{|\alpha|+2j-1}}.
\eeqs
If we insert these inequalities in the estimate for $\left|D^{\alpha}_w p_j(w)\right|$ and use the above inequality for $\ds \frac{\alpha!}{\beta!\gamma!\delta!}$ we obtain
\beqs
\left|D^{\alpha}_w p_j(w)\right|\leq\frac{C_{h_1}^{2s+1}h^{|\alpha|+2j}A_{|\alpha|+2j}|p_0(w)|}{\langle w\rangle^{\rho(|\alpha|+2j)}} \sum_{\beta+\gamma+\delta=\alpha}\sum_{r=1}^{\infty}\sum_{|\nu|=r} \frac{1}{\nu!}\left(\frac{h_1}{h}\right)^{|\beta|+|\delta|+r}.
\eeqs
We already proved that $\ds \sum_{\beta+\gamma+\delta=\alpha}\sum_{r=1}^{\infty}\sum_{|\nu|=r} \frac{1}{\nu!}\left(\frac{h_1}{h}\right)^{|\beta|+|\delta|+r}\leq 1$, hence the proof for the $(M_p)$ case is complete.\\
\indent Next, we consider the $\{M_p\}$ case. By assumption and Lemma \ref{lpc}, there exist $h_1,C_{h_1}\geq 1$ such that (\ref{pr1}) and (\ref{pr2}) hold. Take $h$ so large such that $2^{9d+1}h_1\leq h$ and $e^{4^d dh_1/h}-1\leq 1/2$. There exists $s\in\ZZ_+$ such that $C_{h_1}^2 s'A_{s'-1}\leq A_{s'}$, for all $s'\geq s$. One proves that
\beqs
\left|D^{\alpha}_w p_j(w)\right|\leq C_{h_1}^{2\min\{s,j\}+1}\frac{h^{|\alpha|+2j}A_{|\alpha|+2j}|p_0(w)|} {\langle w\rangle^{\rho(|\alpha|+2j)}},
\eeqs
for all $\alpha\in\NN^{2d}$, $w\in Q^c_B$, $j\in\ZZ_+$, by induction on $j$ in the same manner as for (\ref{zdddd}) in the $(M_p)$ case. This completes the proof in the $\{M_p\}$ case.
\end{proof}

\begin{theorem} \label{parametrix}
Let $a\in\Gamma^{*,\infty}_{A_p,\rho}\left(\RR^{2d}\right)$ be $\Gamma^{*,\infty}_{A_p,\rho}$-hypoelliptic. Then there exist *-regularizing operators $T$ and $T'$ and $b,b'\in\Gamma^{*,\infty}_{A_p,\rho}\left(\RR^{2d}\right)$ such that $b(x,D)a(x,D)=\mathrm{Id}+T$ and $a(x,D)b'(x,D)=\mathrm{Id}+T'$.
\end{theorem}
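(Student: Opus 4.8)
The plan is to use the symbols $p_j$ produced in Lemma \ref{rnc} to build the symbol of a left parametrix, and then to run the same construction with the roles of the two factors interchanged to obtain a right parametrix. First I would check that the formal sum $\sum_{j\in\NN}p_j$, with $p_0=a^{-1}$, is an element of $FS^{*,\infty}_{A_p,\rho}(\RR^{2d})$. Each $p_j$ is smooth on $Q^c_B$, hence on $\mathrm{int}\,Q^c_{Bm_j}$, and estimate (\ref{n12333}) furnishes exactly a bound of the form $h^{|\alpha|+|\beta|+2j}A_{|\alpha|+|\beta|+2j}\langle(x,\xi)\rangle^{-\rho(|\alpha|+|\beta|+2j)}e^{M(m|x|)+M(m|\xi|)}$ with the correct quantifier structure in $m$ and $h$. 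It then remains only to replace the factor $A_{|\alpha|+|\beta|+2j}$ in the numerator by the product $A_\alpha A_\beta A_j^2$ demanded by the definition of $FS^{*,\infty}_{A_p,\rho}$; this is immediate from $(M.2)$ on $A_p$, which gives $A_{|\alpha|+|\beta|+2j}\le c_0^2H^{2(|\alpha|+|\beta|+2j)}A_\alpha A_\beta A_{2j}$ and $A_{2j}\le c_0H^{2j}A_j^2$, the extra geometric factor being absorbed into a redefinition of $h$. Thus $\sum_j p_j\in FS^{*,\infty}_{A_p,\rho}(\RR^{2d})$, and by Proposition \ref{exp} there is a symbol $b\in\Gamma^{*,\infty}_{A_p,\rho}(\RR^{2d})$ with $b\sim\sum_j p_j$.

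Next I would compute the composition $b(x,D)a(x,D)$ via Theorem \ref{composition}. Since $a$ regarded as a formal sum has only the term $a_0=a$, the expansion of the product symbol $c$ collapses to $c\sim\sum_j c_j$ with $c_j=\sum_{|\alpha|\le j}\tfrac{1}{\alpha!}\partial^\alpha_\xi p_{j-|\alpha|}\,D^\alpha_x a$. For $j=0$ this equals $p_0 a=1$; for $j\ge 1$ the term with $|\alpha|=0$ is $a\,p_j$, and multiplying the defining recursion of $p_j$ by $a$ and using $ap_0=1$ shows that $a\,p_j=-\sum_{0<|\alpha|\le j}\tfrac{1}{\alpha!}\partial^\alpha_\xi p_{j-|\alpha|}D^\alpha_x a$, whence $c_j=0$. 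Therefore $c\sim 1$, i.e. $c-1\sim 0$, so $(c-1)(x,D)$ is $*$-regularizing; since the constant symbol $1$ induces the identity operator through (\ref{pseudodif}) by Fourier inversion, we conclude $b(x,D)a(x,D)=\mathrm{Id}+T$ with $T$ $*$-regularizing, after absorbing the regularizing remainder supplied by Theorem \ref{composition}.

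For the right parametrix I would carry out the symmetric construction, setting $p'_0=a^{-1}$ and $p'_j=-p_0\sum_{0<|\nu|\le j}\tfrac{1}{\nu!}\partial^\nu_\xi a\,D^\nu_x p'_{j-|\nu|}$, which is arranged precisely so that, upon applying Theorem \ref{composition} to $a(x,D)b'(x,D)$, the zeroth term is $a\,p'_0=1$ and all higher terms cancel. The estimates of Lemmas \ref{lpc} and \ref{rnc} transfer verbatim to the $p'_j$, since their proofs rely only on the hypoelliptic bounds for $a$ and for $p_0=a^{-1}$ together with $(M.4)$ and Lemma \ref{tec}, all of which are insensitive to which factor carries $\partial_\xi$ and which carries $D_x$; hence $\sum_j p'_j\in FS^{*,\infty}_{A_p,\rho}(\RR^{2d})$, and taking $b'\sim\sum_j p'_j$ yields $a(x,D)b'(x,D)=\mathrm{Id}+T'$ exactly as before. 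The only genuinely delicate points — the uniform-in-$j$ control of the nested derivatives generated by the recursion — have already been settled in Lemma \ref{rnc}, so I expect the main obstacle to be entirely behind us; what is left is the elementary algebraic cancellation in the composition expansion and the bookkeeping identifying $\sum_j p_j$ (and $\sum_j p'_j$) as formal symbols.
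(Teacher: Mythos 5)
Your proposal follows the same route as the paper's own proof in all essentials: converting the estimates of Lemma \ref{rnc} into the form required by the definition of $FS^{*,\infty}_{A_p,\rho}$ via $(M.1)$--$(M.2)$, invoking Proposition \ref{exp} to get $b\sim\sum_j p_j$, using Theorem \ref{composition} and the recursion defining $p_j$ to obtain $c_0=1$, $c_j=0$, hence $c-1\sim 0$, and constructing the right parametrix by the symmetric recursion $p'_j=-p_0\sum_{0<|\nu|\leq j}\frac{1}{\nu!}\partial^{\nu}_{\xi}a\,D^{\nu}_x p'_{j-|\nu|}$ (which is exactly what the paper leaves implicit under ``similar constructions''; your symmetry argument for transferring the estimates of Lemma \ref{rnc} to the $p'_j$ is legitimate, since the Leibniz bookkeeping only permutes which factor carries the inductive bound).

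There is, however, one concrete gap. You assert that ``each $p_j$ is smooth on $Q^c_B$, hence on $\mathrm{int}\,Q^c_{Bm_j}$''. This fails precisely for $j=0$: by the convention $m_0=0$, the zeroth term of a formal sum must belong to $\mathcal{C}^{\infty}(\mathrm{int}\,Q^c_{Bm_0})=\mathcal{C}^{\infty}(\RR^{2d})$ and satisfy the defining estimate on all of $\RR^{2d}$, whereas $p_0=a^{-1}$ is defined only on $Q^c_B$; the hypoellipticity assumption \eqref{dd1} gives a lower bound for $|a|$ only there, and $a$ may vanish inside the bounded set $\RR^{2d}\setminus Q^c_B$. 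As written, $\sum_j p_j$ is therefore not an element of $FS^{*,\infty}_{A_p,\rho}(\RR^{2d})$, and Proposition \ref{exp} cannot be applied. The paper repairs this with one additional step: modify $p_0$ near the boundary of $Q^c_B$ so that it extends to a $\mathcal{C}^{\infty}$ function on $\RR^{2d}$ satisfying \eqref{k1113} globally; the $p_j$, $j\in\ZZ_+$, built from the modified $p_0$ then satisfy the same estimates with a larger $B$, and the identities $c_0=1$, $c_j=0$ still hold on $Q^c_{B'}$ for $B'$ large enough, which is all that the equivalence $c\sim 1$ in Definition \ref{asexp} requires (it only involves suprema over the sets $Q^c_{Bm_N}$). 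With this modification inserted, your argument goes through; for $j\geq 1$ your claim is correct, since $m_j\geq 1$ gives $Q^c_{Bm_j}\subseteq Q^c_B$.
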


\begin{proof} Let $p_j$, $j\in\NN$, be as in Lemma \ref{rnc}.
%Define $p_0(x,\xi)=a(x,\xi)^{-1}$ and inductively
%\beqs
%p_j(x,\xi)=-p_0(x,\xi)\sum_{0<|\nu|\leq j} \frac{1}{\nu!}\partial^{\nu}_{\xi}p_{j-|\nu|}(x,\xi)D^{\nu}_x a(x,\xi),\,\, j\in\ZZ_+.
%\eeqs
Then the functions $p_0$ and $p_j$, $j\in\ZZ_+$, satisfy the estimates given in Lemmas \ref{lpc} and \ref{rnc}. Since $A_p$ satisfies $(M.1)$ and $(M.2)$, these estimates are equivalent to the following:\\
\indent there exist $m,B>0$ such that for every $h>0$ there exists $C>0$ (resp. there exist $h,B>0$ such that for every $m>0$ there exists $C>0$) such that
\beq\label{k1113}
\left|D^{\alpha}_{\xi}D^{\beta}_x p_j(x,\xi)\right|\leq C\frac{h^{|\alpha|+|\beta|+2j}A_{\alpha}A_{\beta}A_j^2 e^{M(m|x|)} e^{M(m|\xi|)}} {\langle(x,\xi)\rangle^{\rho(|\alpha|+|\beta|+2j)}},
\eeq
for all $\alpha,\beta\in\NN^d$, $(x,\xi)\in Q^c_B$, $j\in\NN$. One can modify $p_0$ near the boundary of $Q^c_B$ so that it can be extended to $\mathcal{C}^{\infty}$ function on $\RR^{2d}$ and satisfy (\ref{k1113}) on the whole $\RR^{2d}$. Hence, (\ref{k1113}) remains true for all $j\in\ZZ_+$ with larger $B$. We obtain $\sum_{j=0}^{\infty}p_j\in FS^{\infty,*}_{A_p,\rho}\left(\RR^{2d}\right)$. Let $b\sim \sum_j p_j$, $b\in\Gamma^{*,\infty}_{A_p,\rho}\left(\RR^{2d}\right)$. By Theorem \ref{composition} there exist $c\in\Gamma^{*,\infty}_{A_p,\rho}\left(\RR^{2d}\right)$ and a *-regularizing operator $\widetilde{T}_1'$ such that $b(x,D)a(x,D)=c(x,D)+\widetilde{T}$ and $c$ has the asymptotic expansion $c\sim \sum_j c_j$, where
\beqs
c_j(x,\xi)=\sum_{s+l=j}\sum_{|\nu|=l}\frac{1}{\nu!}\partial^{\nu}_{\xi} p_s(x,\xi) D^{\nu}_x a(x,\xi).
\eeqs
One easily verifies that $c_0(x,\xi)=1$ on $Q^c_B$. Also, for $j\in\ZZ_+$,
\beqs
c_j=p_j a+\sum_{l=1}^{j}\sum_{|\nu|=l}\frac{1}{\nu!}\partial^{\nu}_{\xi} p_{j-l}\cdot D^{\nu}_x a=p_j a+\sum_{0<|\nu|\leq j} \frac{1}{\nu!}\partial^{\nu}_{\xi}p_{j-|\nu|} \cdot D^{\nu}_x a=0,
\eeqs
on $Q^c_B$, by the definition of $p_j$. Hence, $b(x,D)a(x,D)=\mathrm{Id}+T$ for some *-regularizing operator $T$. With similar constructions one obtains $b'$ such that $a(x,D)b'(x,D)=\mathrm{Id}+T'$, where $T'$ is a *-regularizing operator.
\end{proof}

\textit{Proof of Theorem \ref{hypothm}.} Let $u \in \mathcal{S}^{\ast \prime}(\RR^d)$ be a solution of $a(x,D)u=v \in \mathcal{S}^{\ast}(\RR^d)$. Then,  
applying  the left parametrix $b(x,D)$ of $a(x,D)$, we obtain
$u= b(x,D)v - Tu$ for some *-regularizing operator $T$. Hence $u \in \mathcal{S}^{\ast}(\RR^d)$. The theorem is proved. \qed

\vspace{1cm}
\noindent
$^{\textrm{a}}$ Dipartimento di Matematica, Universit\`a di Torino, via Carlo Alberto 10, 10123 Torino, Italy  \\
{\small\bf email: marco.cappiello@unito.it}\\

\noindent
$^{\textrm{b}}$ Department of Mathematics and Informatics, Faculty of Sciences, University of Novi Sad, Novi Sad, Serbia \\
{\small\bf email: stevan.pilipovic@gmail.com}\\

\noindent
$^{\textrm{c}}$ Department of Mathematics, Faculty of Mechanical Engineering, University Ss. Cyril and Methodius, Skopje, Macedonia  \\
{\small\bf email: bprangoski@yahoo.com}

\end{document}